\title[Explicit isogenies of Jacobians of variable-separated curves]{
	Families of Explicitly Isogenous Jacobians
	\\
	of Variable-Separated Curves
}
\author{Benjamin Smith}
\address{
	INRIA Saclay--\^Ile-de-France 
	/ 
	Laboratoire d'Informatique de l'\'Ecole polytechnique (LIX),
	91128 Palaiseau Cedex, France
}
\email{smith@lix.polytechnique.fr}
\urladdr{http://www.lix.polytechnique.fr/Labo/Ben.Smith/}
\theoremstyle{definition}
\newtheorem{definition}{Definition}[section]
\newtheorem{example}{Example}[section]
\theoremstyle{remark}
\newtheorem{remark}{Remark}[section]
\theoremstyle{plain}
\newtheorem{theorem}{Theorem}[section]
\newtheorem{proposition}[theorem]{Proposition}
\newtheorem{lemma}[theorem]{Lemma}
\newtheorem{algorithm}[theorem]{Algorithm}
\newcommand{\WPS}[1]{\ensuremath{\mathbb{P}({#1})}}
\newcommand{\CC}{{\mathbb{C}}}
\newcommand{\QQ}{{\mathbb{Q}}}
\newcommand{\RR}{{\mathbb{R}}}
\newcommand{\QQbar}{{\overline{\mathbb{Q}}}}
\newcommand{\ZZ}{{\mathbb{Z}}}
\newcommand{\product}[3][{}]{\ensuremath{{#2}\!\times_{#1}\!{#3}}}
\newcommand{\XxY}{\product{X}{Y}}
\newcommand{\Jac}[1]{\ensuremath{{J}_{#1}}}
\newcommand{\Jacdual}[1]{\ensuremath{\widehat{J}_{#1}}}
\newcommand{\Newton}[1]{\ensuremath{\mathcal{N}({#1})}}
\newcommand{\Newtonint}[1]{\ensuremath{\mathcal{P}({#1})}}
\newcommand{\Bound}[1]{\ensuremath{b_{#1}}}
\newcommand{\family}[1]{\ensuremath{\mathcal{#1}}}
\newcommand{\Jacfamily}[1]{\ensuremath{{\family{J}}_{#1}}}
\newcommand{\Pic}[2][{}]{\ensuremath{\mathrm{Pic}^{#1}(#2)}}
\newcommand{\Div}[2][{}]{\ensuremath{\mathrm{Div}^{#1}(#2)}}
\newcommand{\Gal}[1]{{\mathrm{Gal}(#1)}}
\newcommand{\Hom}{{\mathrm{Hom}}}
\newcommand{\End}{{\mathrm{End}}}
\newcommand{\Mat}{{\mathrm{Mat}}}
\newcommand{\variety}[1]{\ensuremath{V\!\left({#1}\right)}}
\newcommand{\dualof}[1]{\ensuremath{{#1^{\dagger}}}}
\newcommand{\multiplication}[2][{}]{\ensuremath{[#2]_{#1}}}
\newcommand{\genus}[1]{\ensuremath{{g_{#1}}}}
\newcommand{\differentials}[1]{\ensuremath{\Omega({#1})}}
\newcommand{\closure}[1]{\ensuremath{\overline{#1}}}
\newcommand{\moduli}[1]{\ensuremath{\family{M}_{#1}}}
\newcommand{\integers}[1]{\ensuremath{\family{O}_{#1}}}
\newcommand{\cyclicgroup}[2][{}]{{(\ZZ/{#2}\ZZ)^{#1}}}
\newcommand{\isogenytype}[2]{\ensuremath{\cyclicgroup[#2]{#1}}}
\newcommand{\isogenytypetwo}[4]{\ensuremath{\cyclicgroup[#2]{#1}\!\times\!\cyclicgroup[#4]{#3}}}
\newcommand{\isogenytypethree}[6]{\ensuremath{\cyclicgroup[#2]{#1}\!\times\!\cyclicgroup[#4]{#3}\!\times\!\cyclicgroup[#6]{#5}}}
\newcommand{\genusfn}[2]{\ensuremath{g_{#2}({#1})}}
\newcommand{\rationalrep}[1]{\ensuremath{R(#1)}}
\newcommand{\differentialmatrix}[2][{}]{\ensuremath{D_{#1}({#2})}}
\newcommand{\differentialblock}[2]{\ensuremath{M_{#2}({#1})}}
\newcommand{\Tr}{\ensuremath{\mathrm{Tr}}}
\newcommand{\subgrp}[1]{\ensuremath{\left\langle{#1}\right\rangle}}
\newcommand{\ceilingof}[1]{\ensuremath{\left\lceil{#1}\right\rceil}}
\newcommand{\coker}{\ensuremath{\mathrm{coker}}}
\begin{document}

\begin{abstract}
	We construct six infinite series of families of pairs of curves
	$(X,Y)$
	of arbitrarily high genus, defined over number fields,
	together with an explicit isogeny $\Jac{X} \to \Jac{Y}$
	splitting multiplication by $2$, $3$, or $4$.
	The families are derived from Cassou--Nogu\`es and Couveignes'
	explicit classification
	of pairs $(f,g)$ of polynomials such that $f(x_1) - g(x_2)$
	is reducible.
\end{abstract}

\maketitle

\section{
	Introduction
}
\label{sec:introduction}

Our goal in this article is to give algebraic constructions
of explicit isogenies of Jacobians
of high-genus curves;
we are motivated by the scarcity of examples.
Isogenies of Jacobians 
are special
in genus $g > 3$,
in the sense that quotients of Jacobians 
are generally not Jacobians.
More precisely,
if $\phi: \Jac{X} \to \Jac{Y}$ 
is an isogeny of Jacobians
(that is, 
a geometrically surjective, finite homomorphism
respecting the canonical principal polarizations),
then
its kernel is a maximal $m$-Weil isotropic subgroup of 
the $m$-torsion $\Jac{X}[m]$
for some integer $m$.
On the other hand,
if
$S$ is a subgroup of $\Jac{X}[m]$
satisfying this same property,
then
the quotient $\Jac{X}\to\Jac{X}/S$
is an isogeny of principally polarized abelian varieties,
but in general $\Jac{X}/S$ is only isomorphic to a Jacobian
if the genus of $X$ is $\le 3$
(see~\cite{Oort--Ueno} and~\cite[Theorem 6]{Donagi--Livne}).

Nevertheless, 
families of non-isomorphic pairs of isogenous Jacobians
of high-genus curves exist: recently
Mestre~\cite{Mestre}
and the author~\cite{Smith-hyperelliptic}
have constructed families of hyperelliptic examples.
Here,
we extend the results of~\cite{Smith-hyperelliptic}
to derive new families of 
isogenies of non-hyperelliptic Jacobians
in arbitrarily high genus.
Theorem~\ref{theorem:main} summarises our results.

\begin{definition}
\label{definition:G-isogenies}
	If $\phi$ is an isogeny 
	with kernel isomorphic to a group $G$,
	then we say $\phi$ is a $G$-isogeny.\footnote{
	This is not the conventional notation,
	which replaces $G$ with a tuple of its abelian invariants;
	but it is much more useful in higher dimensions,
	where such tuples are typically very long.}
\end{definition}
\begin{definition}
\label{definition:genus-definition}
	For all positive integers $d$ and $n$, 
	we define the integer $\genusfn{d}{n}$ by
	\[
		\genusfn{d}{n} 
		:= 
		\frac{1}{2}\big((n-1)(d-1) - (\gcd(n,d) - 1)\big) 
		.
	\]
\end{definition}

\begin{theorem}
\label{theorem:main}
	For each integer $d > 1$
	and for each row of the following table,
	there exists 
	a $\nu$-dimensional family of explicit $G$-isogenies of Jacobians
	of curves of genus~$\genusfn{d}{n}$,
	defined over a CM-field of degree $e$;
	and
	if~$d$ is in~$S$,
	then the generic fibre 
	is 
	an isogeny
	of absolutely simple Jacobians
	(here $\mathcal{P}$ denotes the set of primes).
	\begin{center}
	\begin{tabular}{|r|l|c|l|l|}
		\hline
		$n$ & $\nu$ & $e$ 
			& $G$ & $S$ \\
		\hline
		\hline
		$7$ & $d$ & $2$ & 
			$\isogenytype{2}{\genusfn{d}{7}}$ & 
			$\ZZ_{\ge 2}$ \\ 
		\hline
		$11$ & $d-1$ & $2$ & 
			$\isogenytype{3}{\genusfn{d}{11}}$ &
			$\mathcal{P}\setminus\{11\}$ \\
		\hline
		$13$ & $d$ & $4$ & 
			$\isogenytype{3}{\genusfn{d}{13}}$ &
			$\ZZ_{\ge 2}$ \\
		\hline
		$15$ & $d$ & $2$ & 
			$\isogenytypetwo{4}{\genusfn{d}{15}-\genusfn{d}{5}-\genusfn{d}{3}}{2}{2\genusfn{d}{5}+2\genusfn{d}{3}}$ &
			$\mathcal{P}\setminus\{3,5,7\}$ \\
		\hline
		$21$ & $d-1$ & $2$ & 
			$\isogenytypetwo{4}{\genusfn{d}{21}-\genusfn{d}{3}}{2}{2\genusfn{d}{3}}$ &
			$\mathcal{P}\setminus\{3,5,7\}$ \\
		\hline
		$31$ & $d-1$ & $6$ & 
			$(\isogenytypethree{8}{}{4}{2}{2}{2})^{\genusfn{d}{31}/3}$ &
			$\mathcal{P}\setminus\{3,5,31\}$ \\
		\hline
	\end{tabular}
	\end{center}
\end{theorem}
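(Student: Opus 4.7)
The plan is to construct the families row by row, each emerging from a specific pair $(f,g)$ in the Cassou-Nogu\`es--Couveignes classification of polynomials of degree $n$ with $f(x_1) - g(x_2)$ reducible over $\QQbar$. The six rows of the table correspond precisely to the six exceptional classes in that classification, indexed by $n \in \{7,11,13,15,21,31\}$; the remaining ``generic'' classes produce only isogenies that factor through standard cyclic or dihedral constructions and so yield nothing essentially new.

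For each $n$ and each integer $d > 1$, I would take $X$ and $Y$ to be the smooth projective models of $y^d = f(x)$ and $y^d = g(x)$ respectively, with $(f,g)$ varying in the C-N--C parameter space attached to $n$. A ramification count applied to the degree-$d$ projections $X,Y \to \PP{1}$ gives $g_X = g_Y = \genusfn{d}{n}$, in agreement with Definition~\ref{definition:genus-definition}. The integer $\nu$ recorded in each row is the dimension of the $(f,g)$ moduli (after modding out by the affine $x$-action), while the CM field of degree $e$ is the field of definition of the family, forced on us by the monodromy representation of the Galois closure of the $x$-projection (e.g., $\PSL{3}{\FF_2}$ for $n=7$).

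The central step is the construction of the explicit isogeny $\phi \colon \Jac{X} \to \Jac{Y}$ from the factorization $f(x_1) - g(x_2) = F_1(x_1,x_2)F_2(x_1,x_2)$. Setting $y^d = f(x_1) = g(x_2)$ decomposes the fibre product $\product[\PP{1}]{X}{Y}$ (over the $y$-line) into components $C_i = V(F_i)$; each component, together with its projections $\pi_X, \pi_Y$ to $X$ and $Y$, is a correspondence, and I would take $\phi := \pi_{Y*} \pi_X^*$ for the component of minimal bi-degree, so that $\phi$ is not a multiplication map. The rational representation of $\phi$ acting on the basis $\{y^i\,dx/y^{d-1}\}$ of holomorphic differentials can be computed directly from the factorization; putting the resulting integer matrix in Smith normal form reads off the kernel group $G$ listed in the table, and the identity $\phi^\vee \phi = [m]$ for the small integer $m \in \{2,3,4\}$ (or $8$ in the last row) follows from the degrees of $\pi_X$ and $\pi_Y$.

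The main obstacle I anticipate is proving absolute simplicity of the generic fibre of $\Jac{Y}$ for $d \in S$. Here I plan to fix a convenient prime of good reduction and a representative value of $d$ and to compute the characteristic polynomial of Frobenius acting on $H^1$ of $\Jacred{Y}$; after dividing out the mandatory CM-factor of degree $e$ it should be irreducible over $\QQ$, which forces geometric simplicity at that specialization and hence, by the openness of ``geometrically simple'' in families, at the generic point. The primes excluded from $S$ in each row (namely $11$ for $n=11$; $\{3,5,7\}$ for $n=15$ and $n=21$; $\{3,5,31\}$ for $n=31$) are exactly those $d$ at which $y^d = f(x)$ acquires extra automorphisms forcing a further splitting of $\Jac{Y}$, and each such exception must be inspected individually to certify that no other pathology arises for the remaining values of $d$.
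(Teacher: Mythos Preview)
Your outline has the right ingredients --- the Cassou--Nogu\`es--Couveignes pairs, the correspondence coming from a factor of $f(x_1)-g(x_2)$, the differential/Smith-normal-form computation of the kernel, and a specialization argument for simplicity --- but there is a structural gap in the construction itself that breaks the dimension count.

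You take $X: y^d = f(x)$ and $Y: y^d = g(x)$ and claim that $\nu$ is the dimension of the $(f,g)$ moduli. But the Cassou--Nogu\`es--Couveignes families carry at most \emph{one} free parameter (a single $t$ for $n\in\{7,13,15\}$, and none at all for $n\in\{11,21,31\}$), so your construction yields families of dimension at most $1$, never $d$ or $d-1$. The paper instead replaces the left-hand side $y^d$ by a \emph{generic} degree-$d$ polynomial
\[
P_d(y) = y^d + s_2 y^{d-2} + \cdots + s_d,
\]
so that $\family{X}: P_d(y_1)=f(x_1)$ and $\family{Y}: P_d(y_2)=g(x_2)$ depend on the $d-1$ free parameters $s_2,\ldots,s_d$ in addition to any $t$ in $f$. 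The correspondence is $\family{C}=V(y_1-y_2,\,A(x_1,x_2))$, which still makes sense because $y_1=y_2$ forces $f(x_1)=g(x_2)$ regardless of $P_d$. A moduli count (the paper's Lemma~4.1) then gives exactly $d$ or $d-1$ according to whether $f$ carries the extra parameter $t$. Your superelliptic curves $y^d=f(x)$ are the specialization $s_2=\cdots=s_d=0$ of this family; the paper does use such specializations, but only as a tool inside the simplicity proof, not as the family itself.

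A secondary point: your explanation of the excluded primes in $S$ as ``extra automorphisms forcing a splitting'' is not what is happening. The paper's simplicity argument (Proposition~\ref{proposition:simplicity}) combines Zarhin's theorems on endomorphism rings of superelliptic and hyperelliptic Jacobians with a handful of explicit Weil-polynomial computations; the primes left out of $S$ are simply those to which neither tool applies, not cases where the Jacobian is known to split.
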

\begin{proof}
	Follows from Propositions~\ref{proposition:phi-d-7}
	through~\ref{proposition:simplicity}.
\end{proof}

The proof of Theorem~\ref{theorem:main}
is organised as follows:
In \S\ref{section:construction},
we associate a family of pairs of curves $(\family{X},\family{Y})$
to each integer $d > 1$ and each pair of polynomials $(Q_X,Q_Y)$
such that $Q_X(x_1)-Q_Y(x_2)$ has a nontrivial factorization.
We also give a correspondence~$\family{C}$ 
on~$\product{\family{X}}{\family{Y}}$
inducing an explicit homomorphism 
$\phi_{\family{C}}: \Jacfamily{\family{X}}\to\Jacfamily{\family{Y}}$.
In \S\ref{sec:moduli} and \S\ref{section:representation}
we develop methods to determine
the number of moduli and
the kernel structure of $\phi_{\family{C}}$.
We recall the classification of 
Cassou--Nogu\`es and Couveignes~\cite{Cassou-Nogues--Couveignes}
in \S\ref{sec:polynomials},
and then apply our constructions to their polynomials
in~\S\S\ref{family:degree-7}-\ref{family:degree-31}.
Finally, 
in \S\ref{section:simplicity}
we list some values of $d$ where $\Jacfamily{\family{X}}$
and $\Jacfamily{\family{Y}}$ are known to be 
absolutely simple.

\subsection*{Connections to prior work}
The chief contribution of this work
is the construction of non-hyperelliptic families:
to our knowledge, 
all of the families of isogenies of Jacobians in genus $g > 3$ 
in the literature
are of hyperelliptic Jacobians.
The non-hyperelliptic families 
(those with~$d > 2$)
are all new.
Technically, the main improvement over~\cite{Smith-hyperelliptic}
is a more sophisticated approach to computing the action on differentials:
this allows us to treat all $d>1$ simultaneously,
and to determine the isogeny kernel structures
when $d > 2$ (the approach in~\cite{Smith-hyperelliptic}
uses an explicit description of the $2$-torsion specific to
hyperelliptic curves).
The hyperelliptic families (those with~$d = 2$)
have all appeared in earlier works:
The families $\phi_{2,7}$, $\phi_{2,11}$, $\phi_{2,13}$,
$\phi_{2,15}$, $\phi_{2,21}$, and $\phi_{2,31}$
are isomorphic to
the `linear construction' families in~\cite{Smith-hyperelliptic}.
The subfamily of~$\phi_{2,7}$ with $s_2 = 0$
and
the fibre of~$\phi_{2,11}$ at $s_2 = 0$
appear in Kux's thesis~\cite[Examples pp.59-60]{Kux--thesis}.
The endomorphisms of Proposition~\ref{proposition:RM-family}
with~$d = 2$
are isomorphic to those
described by Tautz, Top, and Verberkmoes~\cite{Tautz--Top--Verberkmoes}.

\subsection*{Notation}

Throughout, $K$ denotes a field of characteristic $0$
and $\zeta_n$ denotes a primitive $n^\mathrm{th}$
root of unity in $\QQbar \subset \closure{K}$.
Automorphisms of $K/\QQ$
act on polynomials over $K$
by acting on their coefficients.

\subsection*{Files}

Six files 
accompany this article
(\texttt{degree-n.m}, for \texttt{n} in $\{ 7, 11, 13, 15, 21, 31 \}$),
containing the coefficients of the polynomials and matrices
that appear in \S\S\ref{family:degree-7}-\ref{family:degree-31}.
(These objects are too big to be useful in printed form:
for example,
the matrix $\differentialblock{A_{31}}{30}$ in the proof of
Proposition~\ref{proposition:phi-d-31} is a~$\product{30}{30}$ matrix over a 
sextic number field, with $436$ nonzero entries.)
Each file is a program in the Magma language~\cite{Magma,MagmaHB},
but
they
should be easily adaptable
for use in other computational algebra systems;
in any case,
the reader need not be familiar with Magma to make use of the data.
If the files are not attached to this copy of the article,
then they may be found from the author's webpage.

\subsection*{Acknowledgements}

We thank 
John Voight,
for his suggestions at the 
\textsl{Explicit Methods in Number Theory} workshop
at the FWO in Oberwolfach, 2009;
the workshop organisers and the FWO itself, 
for the fruitful environment in which this work was begun;
Wouter Castryck, 
for his patient help and for pointing out Koelman's thesis;
and Frederik Vercauteren,
for sharing his implementation of the Gaudry--G\"urel 
point counting algorithm.

\section{
	Correspondences
}
\label{section:correspondences}

We begin with
a brief review of the theory of correspondences.
(See~\cite[\S11.5]{Birkenhake--Lange} 
and~\cite[\S16]{Fulton} 
for further detail.)

Let $X$ and $Y$ be (projective, irreducible, nonsingular) curves over a field $K$,
and let $C$ be a curve on the surface $\XxY$.
The natural projections 
from $\XxY$ 
restrict to morphisms
$\pi^{C}_{X}: C \to X$
and 
$\pi^{C}_{Y}: C \to Y$,
which in turn induce pullback and pushforward homomorphisms on divisor classes:
in particular, we have homomorphisms
\[
	(\pi^{C}_{X})^* : \Pic{X} \to \Pic{C} 
	\text{\quad and\quad }
	(\pi^{C}_{Y})_* : \Pic{C} \to \Pic{Y} .
\]
Both \((\pi^{C}_{X})^*\) and \((\pi^{C}_{Y})_*\) map
degree-$0$ classes to degree-$0$ classes,
and so induce homomorphisms of Jacobians
(and, \emph{a fortiori}, of principally polarized abelian varieties).
Composing, we get a homomorphism of Jacobians
\[
	\phi_C := (\pi^{C}_{Y})_*\circ(\pi^{C}_{X})^*
	: 
	\Jac{X} \longrightarrow \Jac{Y}
	;
\]
we say $C$ \emph{induces} $\phi_C$. 
We emphasize that
$\phi_C$ is completely explicit, given equations for~$C$:
we can evaluate $\phi_C(P)$ for any $P$ in $\Jac{X}$
by choosing a representative divisor 
from the corresponding class in $\Pic[0]{X}$,
pulling it back to $C$ with \((\pi^{C}_{X})^*\),
and pushing the result forward onto $Y$ with \((\pi^{C}_{Y})_*\).
Extending $\ZZ$-linearly so that $\phi_{C_1 + C_2} = \phi_{C_1} + \phi_{C_2}$,
we may take $C$ to be an arbitrary divisor on $\XxY$.
We call divisors on $\XxY$ \emph{correspondences}.

The map $C \mapsto \phi_C$;
defines
a homomorphism $\Div{\XxY} \to \Hom(\Jac{X},\Jac{Y})$;
its kernel
is generated by the principal divisors and the fibres of $\pi_X$ and $\pi_Y$.
The map is surjective:
every homomorphism $\phi: \Jac{X}\to\Jac{Y}$
is induced by some correspondence~$\Gamma_\phi$ on $\XxY$
(we may take
$\Gamma_\phi = (\phi\circ\alpha_{X}\times\alpha_{Y})^*\mu^*(\Theta_{Y})$,
where $\alpha_X: X \hookrightarrow \Jac{X}$ 
and $\alpha_Y: Y \hookrightarrow \Jac{Y}$
are the canonical inclusions,
$\Theta_Y$ is the theta divisor on $\Jac{Y}$,
and $\mu: \Jac{Y}\times\Jac{Y} \to \Jac{Y}$ is the subtraction map).
We therefore have an isomorphism
\[
	\Pic{\XxY} \cong \Pic{X}\oplus\Pic{Y}\oplus\Hom(\Jac{X},\Jac{Y}) .
\]

Exchanging the r\^oles of $X$ and $Y$ in the above,
we obtain the image of $\phi_C$ under the Rosati involution:
\[
	\dualof{\phi_C} = (\pi^{C}_{X})_*\circ(\pi^{C}_{Y})^* 
	:
	\Jac{Y} \longrightarrow \Jac{X}
	.
\]
(Recall that 
\( \dualof{\phi_C} := \lambda_{X}^{-1}\circ\hat\phi_C\circ\lambda_{Y} \),
where $\hat\phi_C:\Jacdual{Y}\to\Jacdual{X}$ is the dual homomorphism
and $\lambda_{X}:\Jac{X}\stackrel{\sim}{\to}\Jacdual{X}$
and $\lambda_{Y}:\Jac{Y}\stackrel{\sim}{\to}\Jacdual{Y}$
are the canonical principal polarizations.)

Composition of homomorphisms corresponds to
fibred products of correspondences:
if $X$, $Y$, and $Z$ are curves,
and $C$ and $D$
are correspondences on $\XxY$ and $\product{Y}{Z}$ respectively,
then 
$\product[Y]{C}{D}$ is a correspondence on $\product{X}{Z}$
and
\[
	\phi_D\circ\phi_C = \phi_{(\product[Y]{C}{D})} .
\]

Let $\differentials{X}$ and $\differentials{Y}$
denote the $\genusfn{d}{n}$-dimensional
$K$-vector spaces of regular differentials on $X$ and~$Y$, 
respectively.
The homomorphism $\phi_C: \Jac{X} \to \Jac{Y}$
induces a homomorphism of differentials
\[
	\differentialmatrix{\phi_C}: 
	\differentials{X} 
	\longrightarrow 
	\differentials{Y} 
\]
(see~\cite{Shimura} for details). 
The image of a regular differential $\omega$ on $X$
under $\differentialmatrix{\phi_{C}}$
is
\[
	\differentialmatrix{\phi_C}(\omega)
	=
	\Tr^{\differentials{C}}_{\differentials{Y}}(\omega) ,
\]
where the inclusion $\differentials{X} \hookrightarrow \differentials{C}$  
and the trace $\differentials{C}\to \differentials{Y}$
are induced by the natural inclusions
of $K(X)$ and $K(Y)$ in $K(C)$.
The map $\phi_C \mapsto \differentialmatrix{\phi_C}$
extends to
a faithful representation 
\[
	\differentialmatrix{\cdot}: 
	\Hom(\Jac{X},\Jac{Y}) 
	\to 
	\Hom(\differentials{X},\differentials{Y})
\]
(the faithfulness depends on the fact that $K$ has characteristic $0$).
We view differentials as row vectors,
and homomorphisms as matrices acting by multiplication on the right.
Composition of homomorphisms corresponds to matrix multiplication:
\[
	D(\phi_2\circ\phi_1) = D(\phi_1)D(\phi_2)
\]
for all $\phi_1:\Jac{X} \to \Jac{Y}$
and $\phi_2:\Jac{Y} \to \Jac{Z}$.
In particular,
if $Y = X$
then $D(\cdot)$ is a representation of rings;
in general,
$D(\cdot)$ is a representation 
of left $\End(\Jac{X})$- 
and right $\End(\Jac{Y})$-modules.

\begin{example}
\label{example:diagonal}
	Suppose that $X$
	is a curve with affine plane model $X: F(x,y) = 0$,
	and let $x_1,y_1$ and $x_2,y_2$
	denote the coordinate functions on the first and second
	factors of $\product{X}{X}$, respectively.
	Our first example of a nontrivial correspondence
	is the diagonal
	\[
		\Delta_X := \variety{y_1 - y_2, x_1 - x_2}
		\subset
		\product{X}{X}
		,
	\]
	which induces
	the identity map:
	$\phi_{\Delta_X} = \multiplication[\Jac{X}]{1}$.
	More generally,
	if $\psi$ is an automorphism of $X$,
	then $(\mathrm{Id}\times\psi)(X)$
	is a correspondence on $\product{X}{X}$ inducing~$\psi$.
\end{example}

\begin{example}
\label{example:hypersurface}
	Let $X$ and $Y$ be curves
	with affine plane models $X: F_X(x_1,y_1) = 0$
	and $Y: F_Y(x_2,y_2) = 0$.
	For any polynomial $A(x_1,y_1,x_2,y_2)$,
	the correspondence	
	$C = \variety{A}$ is rationally equivalent to a sum of fibres
	of $\pi_X$ and $\pi_Y$,
	and so induces the trivial homomorphism:
	on the level of degree-$0$ divisor classes,
	\[
		\phi_C\Big(\Big[{\sum_{P\in X(\closure{K})}n_P(P)}\Big]\Big)
		=
		\Big[\mathrm{div}\Big({\prod_{P\in X(\closure{K})}A(x_1(P),y_1(P),x_2,y_2)^{n_P}}\Big)\Big]
		=
		0
		.
	\]
	In particular,
	correspondences 
	inducing nonzero homomorphisms 
	must be
	cut out by more than one defining equation
	(cf.~Example~\ref{example:diagonal}).
\end{example}

\section{
	Variable-separated curves and correspondences
}
\label{section:construction}

Now let $X$ and $Y$ be variable-separated plane curves over $K$:
that is, we suppose that $X$ and $Y$ have affine plane models
\[
	X: P_X(y_1) = Q_X(x_1)
	\text{\quad and\quad }
	Y: P_Y(y_2) = Q_Y(x_2),
\]
where $P_X$, $Q_X$, $P_Y$, and $Q_Y$ are polynomials over $K$.
(This includes elliptic, hyperelliptic, and superelliptic $X$ and $Y$.)
We restrict our attention to the case 
where $P_X$, $P_Y$, $Q_X$ and $Q_Y$
are indecomposable:
that is, they cannot be written as compositions
of polynomials of degree at least two
(cf.~Remark~\ref{remark:indecomposability}).

Our aim is to give examples of correspondences
inducing nontrivial homomorphisms.
If 
$C = \variety{A}$ for some polynomial $A$,
then $\phi_{C} = 0$
(cf.~Example~\ref{example:hypersurface});
so we need to find divisors on $\XxY$
defined by at least two equations.
We investigate the simplest 
nontrivial case,
where each involves only two variables:
\[
	C = \variety{A(x_1,x_2),B(y_1,y_2)} \subset \XxY .
\]

We immediately reduce to the case where $P_X = P_Y$ 
and $B(y_1,y_2) = y_1-y_2$:
Let~$Z$ be the curve defined by
\(
	Z: P_X(v) = Q_Y(u) 
\),
and define correspondences 
\(
	C_1 = \variety{A(x_1,u), y_1 - v} 
\)
and
\(
	C_2 = \variety{u - x_2, B(v,y_2)}
\)
on $\product{X}{Z}$ and $\product{Z}{Y}$, respectively.
Then $C = \product[Z]{C_1}{C_2}$, so
\[
	\phi_C = \phi_{C_2}\circ\phi_{C_1} .
\]
Replacing $Y$ with $Z$
and $C$ with $C_1$
(or $X$ with $Z$ and $C$ with $C_2$), 
we reduce to the study of curves and correspondences defined by
\[
	X: P(y_1) = Q_X(x_1), 
	\quad 
	Y: P(y_2) = Q_Y(x_2),
	\quad 
	C = \variety{y_1-y_2,A(x_1,x_2)}
	.
\]

For $C$ to be one-dimensional,
we must have
\(A(x_1,x_2) | (Q_X(x_1) - Q_Y(x_2))\);
we will see in~\S\ref{sec:polynomials} 
that the existence of such a nontrivial factor
is special.
It is noted in~\cite[\S2.1]{Cassou-Nogues--Couveignes}
that if $Q_X$ and $Q_Y$ are indecomposable,
then the existence of a nontrivial~$A$
implies that 
$Q_X$ and $Q_Y$ have the same degree
\[
	n := \deg Q_X = \deg Q_Y ;
\]
and further that there exists some integer $r$ such that
\[
	r 
	= 
	\deg_{x_1}(A(x_1,x_2)) 
	= 
	\deg_{x_2}(A(x_1,x_2)) 
	= 
	\deg_{\mathrm{tot}}(A(x_1,x_2))
	,
\]
so we may write
\begin{equation}
\label{eq:A-form}
	A(x_1,x_2) 
	= 
	\sum_{i = 0}^r c_i(x_2)x_1^{r-i} 
	\quad \text{with } 
	\deg c_i \le i
	\text{ for all } 0 \le i \le r
	.
\end{equation}

We have no restrictions on $P$,
so we let it be (almost) generic\footnote{
	We could define $P_d$ to be the generic monic polynomial of degree $d$,
	but we can always change variables to remove its trace term 
	in characteristic zero,
	and this will be convenient in the sequel.
}:
for each integer $d > 1$ we let
$s_2,\ldots,s_d$ be free parameters,
and define $P_d$ to be the 
 polynomial
\[
	P_d(y) := y^d + s_2y^{d-2} + \cdots + s_{d-1}y + s_d .
\]
Note that $P_d$ is indecomposable.
Henceforward,
therefore, we consider families 
of curves $\family{X}$ and $\family{Y}$
and correspondences $\family{C}$
in the form
\begin{equation}
\label{eq:setup}
	\framebox{$
	\begin{array}{c}
	\family{X}: P_d(y_1) = Q_X(x_1),
	\quad \quad 
	\family{Y}: P_d(y_2) = Q_Y(x_2),
	\\
	{}
	\\
	\family{C} = \variety{y_1 - y_2, A(x_1,x_2)} 
	\subset 
	\product{\family{X}}{\family{Y}} , 
	\\
	{}
	\\
	\text{with } 
	Q_X \text{ and } Q_Y \text{ indecomposable of degree } n,
	\text{ and } A \text{ as in Eq. }(2) .
	\end{array}
	$}
\end{equation}
The families are parametrized by $s_2,\ldots,s_d$,
together with any parameters in the coefficients of $Q_X$ and $Q_Y$.
The special case $d = 2$,
which produces hyperelliptic families,
is the \emph{linear construction} of~\cite{Smith-hyperelliptic}
(with $s = -s_2$).

The Newton polygon of $\family{X}$ (and $\family{Y}$)
is
\[
	\Newton{d,n}
	= 
	\{ 
		(\lambda_1,\lambda_2) \in \RR_{\ge 0}^2
		: 
		d\lambda_1 + n\lambda_2 \le dn 
	\}
	.
\]
The families $\family{X}$ and $\family{Y}$
have (generically) nonsingular projective models
in the weighted projective plane $\WPS{d,n,1}$,
which is
the projective toric surface associated to $\Newton{d,n}$
(we see in~\cite{Reid} that $\WPS{d,n,1} = \WPS{d/m,n/m,1}$, 
where $m = \gcd(d,n)$).

We let 
$\Newtonint{d,n}$ denote the set
of integer interior points of the Newton polygon:
\[
	\Newtonint{d,n}
	=
	\{ (\lambda_1, \lambda_2) \in \ZZ_{> 0}^2 : 
		d \lambda_1 + n \lambda_2 < d n \}
	.
\]
The geometric genus of $\family{X}$ (and of $\family{Y}$) is equal to
$\#\Newtonint{d,n}$,
and it is easily verified that
if $g_n(d)$ is the function of Definition~\ref{definition:genus-definition},
then
\[
	\genus{\family{X}} 
	=
	\genus{\family{Y}}
	=
	\#\Newtonint{d,n}
	=
	\genusfn{d}{n} 
	.
\]

\begin{remark}
	Most known nontrivial examples of 
	explicit isogenies of Jacobians,
	including the isogenies of Richelot~\cite{Bost--Mestre}, 
	Mestre~\cite{Mestre}, and V\'elu~\cite{Velu}
	and the endomorphisms of Brumer~\cite{Brumer}
	and Hashimoto~\cite{Hashimoto},
	are \emph{not} induced by correspondences 
	in the form of Eq.~\eqref{eq:setup}.
	However,
	the explicit real multiplications 
	of Mestre~\cite{Mestre--RM} 
	and Tautz, Top, and Verberkmoes~\cite{Tautz--Top--Verberkmoes}
	are in the form of Eq.~\eqref{eq:setup}.
\end{remark}

\begin{remark}
\label{remark:rational-function-generalization}
	Our construction generalizes readily to the case where $P_d$,
	$Q_X$, and~$Q_Y$ are rational functions
	instead of polynomials.
	While this yields many more families,
	it also complicates the algorithmic aspects
	of our constructions below.
\end{remark}

\section{
	Isomorphisms and Moduli
}
\label{sec:moduli}

We want to compute the number of moduli of~$\family{X}$:
that is, the dimension of the image of~$\family{X}$
in the moduli space~$\moduli{\genusfn{d}{n}}$ 
of curves of genus~$\genusfn{d}{n}$ over $\closure{K}$.
By Torelli's theorem,
this is also the dimension of the image of the family $\phi_{\family{C}}$
in the appropriate moduli space of homomorphisms of principally polarized abelian varieties.

We will adapt the methods of Koelman's thesis~\cite{Koelman}
to compute the number of moduli.
Up to automorphism, 
we can determine the form of 
the polynomials defining any isomorphism between curves in $\family{X}$
by considering column structures and column vectors 
on the projective toric surface
associated to $\Newton{d,n}$,
where $\family{X}$ has a convenient nonsingular embedding
(see~\cite{Bruns--Gubeladze}, \cite{Castryck--Voight}, and~\cite{Koelman}
for details).

More specifically,
for $d > 2$,
we embed $\family{X}$ in $\WPS{d,n,1}$.
The $\closure{K}$-isomorphisms between distinct curves in $\family{X}$
must then take the form
\begin{equation}
\label{eq:iso-form}
	(x,y)
	\longmapsto
	(ax + b, ey)
\end{equation}
for some $a$, $b$, $e$ in $\closure{K}$ with $a$ and $e$ nonzero.
When $d = 2$,
it is more convenient to embed $\family{X}$ in $\WPS{1,\genusfn{d}{n}+1,1}$;
the $\closure{K}$-isomorphisms 
must then take the form
\begin{equation}
\label{eq:hyperelliptic-iso-form}
	(x,y) 
	\longmapsto 
	((ax + b)/(cx + d),ey/(cx + d)^{(\genusfn{d}{n}+1)})
\end{equation}
for $a$, $b$, $c$, $d$, and $e$ in $\closure{K}$ with
$e$ and $ad - bc$ nonzero.

\begin{lemma}
\label{lemma:family-dimension-no-t}
\label{lemma:family-dimension-with-t}
	Let $d > 1$ be an integer,
	$K$ a subfield of $\CC$,
	and 
	\( f(x) = \sum_{i = 0}^n f_ix^{n-i} \)
	a polynomial over $K$
	or $K(t)$,
	where $t$ is a free parameter,
	such that $\genusfn{d}{n}>1$ and
	\[
		(i)\  f_0 = 1,\quad 
		(ii)\  f_1 = 0,\quad 
		(iii)\  f_2 \not= 0,\quad 
		\text{and }\ 
		(iv)\  f_3 = \kappa f_2 \ \text{ for some } \kappa\in K
		.
	\]
	Let $\family{X}$ be the family defined by $\family{X}: P_d(y) = f(x)$.
	Then 
	\begin{enumerate}
	\item	if $f_i$ is in $K(t)\setminus K$
		for some $2 \le i < n$,
		then $\family{X}$
		has $d$ moduli;
	\item	otherwise,
		$\family{X}$
		has $d-1$ moduli.
	\end{enumerate}
\end{lemma}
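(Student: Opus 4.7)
The plan is to show that the natural parametrization of $\family{X}$ by $(s_2,\ldots,s_d)$ (together with $t$, when present) is generically finite onto its image in $\moduli{\genusfn{d}{n}}$. Since the number of moduli is by definition the dimension of this image, the claim will then follow from a simple parameter count: $d-1$ parameters coming from $P_d$ in case~(2), augmented by one extra parameter $t$ in case~(1).

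First I would invoke the toric setup described in the paragraphs preceding the lemma: because $P_d(y)-f(x)$ has bidegree $(d,n)$, Koelman's analysis of column structures on the toric surface associated to $\Newton{d,n}$ (resp.\ on $\WPS{1,\genusfn{d}{n}+1,1}$ in the hyperelliptic case $d=2$) shows that every $\closure{K}$-isomorphism between two generic fibres of $\family{X}$ takes the form of Eq.~\eqref{eq:iso-form} when $d>2$, and of Eq.~\eqref{eq:hyperelliptic-iso-form} when $d=2$.

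Next I would substitute such an isomorphism into $P_d(y)=f(x)$ and demand that the image remain in the family (i.e.\ still satisfy (i)--(iv)). For $d>2$ this is clean: matching the $x^n$ coefficient forces $e^d=a^n$ by~(i), matching the $x^{n-1}$ coefficient forces $b=0$ by~(ii), and the induced action on parameters becomes the diagonal rescaling $s_i\mapsto s_i/e^i$ and $f_i\mapsto f_i/a^i$. Applying (iv) to the image then yields $f_3/a^3=\kappa(f_2/a^2)$, i.e.\ $\kappa(a-1)f_2=0$; combined with (iii) this forces $a=1$ when $\kappa\neq 0$, whence $e$ is a $d$-th root of unity. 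The edge case $\kappa=0$ is handled by using generic nonvanishing of some later $f_i$ or $s_i$ to pin $a$ and $e$ down to a finite set. Either way the automorphism group acting on parameter space is finite, the parametrization is generically finite onto its image, and the count gives $d-1$ in case~(2), $d$ in case~(1) (the extra $t$ contributing one further modulus since it is not killed by the finite action).

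The main obstacle is the hyperelliptic case $d=2$: Eq.~\eqref{eq:hyperelliptic-iso-form} admits a full M\"obius transformation in $x$, giving a four-parameter ambient group rather than the three-parameter $(a,b,e)$-group available for $d>2$. Cutting this back to a finite group requires a more intricate coefficient-matching, using (i) together with the distinguished behaviour of the hyperelliptic projection at infinity to fix most of the M\"obius parameters, then (ii) to kill the remaining translation in $x$, and finally (iv) to pin down the scaling, exactly as in the $d>2$ case. This step is the most delicate, but it directly mirrors the analysis of the linear construction in~\cite{Smith-hyperelliptic}.
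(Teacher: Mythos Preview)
Your approach is essentially the same as the paper's: both invoke the Koelman forms of isomorphisms (Eqs.~\eqref{eq:iso-form} and~\eqref{eq:hyperelliptic-iso-form}) and then argue that imposing conditions (i)--(iv) on the codomain equation pins down the isomorphism parameters to a finite set, so that the parametrization by $(s_2,\ldots,s_d)$ (and $t$) is generically finite onto its image in moduli. The paper's proof is terser---it restricts to the open set $\family{U}$ where all $s_i$ are nonzero, notes that the parameters appear in distinct coefficients (so there are no constant subfamilies), and then simply asserts that (i)--(iv) determine $e$ and $ax+b$ (or the M\"obius map when $d=2$) up to finitely many choices, without spelling out the coefficient-matching you describe.
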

\begin{proof}
	Let $\family{U}$ be the open subfamily of $\family{X}$
	where $s_2,\ldots,s_d$ are all nonzero.
	It suffices to show that the intersection of $\family{U}$
	with the isomorphism class of any curve in $\family{U}$
	is finite.
	First, observe that $\family{U}$ has no nontrivial constant subfamilies:
	the parameters $s_1,\ldots,s_d$ (and $t$ in Case (1)) appear
	in distinct coefficients of the defining equation of $\family{U}$.
	It is enough, therefore, 
	to show that there are only finitely many possible
	defining equations for isomorphisms from a fixed curve in $\family{U}$
	to other curves in~$\family{U}$.
	Every such isomorphism
	has the form of 
	Eq.~\eqref{eq:iso-form} 
	(or
	Eq.~\eqref{eq:hyperelliptic-iso-form}
	for $d = 2$).
	But the defining equation of the codomain curve
	must satisfy (i) through (iv),
	which determine $e$ and $ax + b$ (or $(ax + b)/(cx + d)$)
	up to a finite number of choices.
\end{proof}

\section{
	The representation on differentials
}
\label{section:representation}

Let $\family{X}$, $\family{Y}$, and $\family{C}$ 
be as in Eq.~\eqref{eq:setup}.
We want to make the representation $\differentialmatrix{\phi_{\family{C}}}$
of~\S\ref{section:correspondences} completely explicit,
with a view to determining the structure of $\ker\phi_C$.
It suffices to consider the generic fibres $X$, $Y$, and $C$
of $\family{X}$, $\family{Y}$, and $\family{C}$ respectively.
In this section,
$K$ denotes the field of definition of $X$, $Y$, and~$C$.

First, we partition $\Newtonint{d,n}$ into disjoint ``vertical'' slices:
\begin{equation}
\label{eq:Newton-slicing}
	\Newtonint{d,n}
	= 
	\bigsqcup_{i = 1}^{\Bound{d,n}}
	\{ 
		(i, j) : 
		1 \le j \le p_{d,n}(i)
	\}
	,
\end{equation}
where
\[
	\Bound{d,n} 
	:= 
	\max\{i: (i,j)\in\Newtonint{d,n}\}
	=
	\ceilingof{(1-1/n)d} - 1 
\]
and 
\[
	p_{d,n}(i) := \ceilingof{(1-i/d)n} - 1 
	\quad \text{ for } 1\le i \le \Bound{d,n}. 
\]
We fix a basis for the spaces of regular differentials on $X$ and $Y$:
\[
	\differentials{X}
	=
	\subgrp{ \omega_{i,j} : (i,j) \in \Newtonint{d,n} }
	\text{\quad and\quad }
	\differentials{Y} 
	= 
	\subgrp{ \omega_{i,j}' : (i,j) \in \Newtonint{d,n} }
	,
\]
where
\[
	\omega_{i,j} := \frac{y_1^{i-1}}{P_d'(y_1)}d(x_1^j)
	\text{\quad and\quad }
	\omega_{i,j}' := \frac{y_2^{i-1}}{P_d'(y_2)}d(x_2^j)
	.
\]
This fixes isomorphisms of $\differentials{X}$ and $\differentials{Y}$
with $K^{\genusfn{d}{n}}$;
we view regular differentials on $X$ and $Y$
as row $\genusfn{d}{n}$-vectors over $K$.
If we define subspaces
\[
	\differentials{X}_i
	:= 
	\subgrp{ 
		\omega_{i,j}
		:
		1 \le j \le p_{d,n}(i)
	}
	\text{\quad and\quad }
	\differentials{Y}_i
	:= 
	\subgrp{ 
		\omega_{i,j}'
		:
		1 \le j \le p_{d,n}(i)
	}
\]
for $1 \le i \le \Bound{d,n}$,
then the partition of Eq.~\eqref{eq:Newton-slicing}
induces direct sum decompositions
\begin{equation}
\label{eq:differential-decomposition}
	\differentials{X}
	=
	\bigoplus_{i=1}^{\Bound{d,n}} 
	\differentials{X}_{i}
	\text{\quad and\quad }
	\differentials{Y}
	=
	\bigoplus_{i=1}^{\Bound{d,n}} 
	\differentials{Y}_{i}
	.
\end{equation}

Since $y_1 = y_2$ in $K(C)$,
the image of $\omega_{i,j}$
under $\differentialmatrix{\phi_{C}}$
is
\[
	\differentialmatrix{\phi_C}(\omega_{i,j})
	=
	\Tr^{\differentials{C}}_{\differentials{Y}}\Big(\frac{y_1^{i-1}d(x_1^j)}{P_d'(y_1)}\Big)
	=
	\frac{y_2^{i-1} d\big(\Tr^{K(C)}_{K(Y)}(x_1^j)\big) }{P_d'(y_2)}
	=
	\frac{y_2^{i-1}}{P_d'(y_2)}dt_j ,
\]
where
\[
	t_j 
	:= 
	\Tr^{K(x_2)[x_1]/(A(x_1,x_2))}_{K(x_2)}(x_1^j)
	.
\]
By definition,
$t_j$ is the $j^\mathrm{th}$ power-sum symmetric polynomial
in the roots of $A$ viewed as a polynomial in $x_1$ over $\closure{K(x_2)}$;
but for $k > 0$,
the $k^\mathrm{th}$ elementary symmetric polynomial in these same roots
is equal to $(-1)^kc_k/c_0$,
where $c_k$ and $c_0$ are as in
Eq.~\eqref{eq:A-form}.
We can therefore compute the $t_j$
using the Newton--Girard recurrences
\[
	t_1 = -\frac{c_1}{c_0},
	\quad
	t_2 = -\frac{2c_2 + t_1c_1}{c_0},
	\quad
	\cdots,
	\quad
	t_j = -\frac{jc_j + \sum_{k=1}^{j-1}c_jt_{j-k}}{c_0}
	.
\]
Equation~\eqref{eq:A-form} implies
$\deg t_j \le \deg c_j \le j$, so
expanding $t_j$ in terms of $x_2$ we write
\[
	t_j = \sum_{k= 0}^j\mu_{j,k}x_2^{k} .
\]
In terms of differentials
we have 
\(
	dt_j 
	= 
	d(\sum_{k=0}^j\mu_{j,k}x_2^{k}) 
	= 
	\sum_{k=1}^j\mu_{j,k}d(x_2^k)
\),
so
\[
	\differentialmatrix{\phi_C}(\omega_{i,j})
	=
	\sum_{k=1}^j \mu_{j,k}\omega_{i,k}'
	.
\]
In particular, $\differentialmatrix{\phi_C}$ respects
the decomposition of Eq.~\eqref{eq:differential-decomposition}:
that is,
\begin{equation}
\label{eq:restriction-mapping}
	\differentialmatrix{\phi_C}(\differentials{X}_i) 
	\subset 
	\differentials{Y}_i
\end{equation}
for all \(1 \le i \le \Bound{d,n} \).
For each $0 < k < n$, we define a matrix
\[
	\differentialblock{A}{k}
	:= 
	\left( \begin{array}{rrrcl}
		\mu_{1,1} & 0 & 0 & \cdots & 0 \\
		\mu_{2,1} & \mu_{2,2} & 0 & \cdots & 0 \\
		\mu_{3,1} & \mu_{3,2} & \mu_{3,3} & \cdots & 0 \\
		\vdots    & \vdots    & \vdots &       & \vdots \\
		\mu_{k,1} & \mu_{k,2} & \mu_{k,3} & \cdots & \mu_{k,k} \\
		\end{array}
	\right)
\]
representing
\( 
	\differentialmatrix{\phi_{C}}|_{\differentials{X}_{k}}
	:
	{\differentials{X}_{k}}
	\to
	{\differentials{Y}_{k}}
\).
Combining Equations~\eqref{eq:differential-decomposition}
and~\eqref{eq:restriction-mapping},
we have
\begin{equation}
\label{eq:differential-matrix-decomposition}
	\differentialmatrix{\phi_C}
	=
	\bigoplus_{i=1}^{\Bound{d,n}}\differentialblock{A}{p_{d,n}(i)}
	.
\end{equation}
The $i^\mathrm{th}$ summand
in Eq.~\eqref{eq:differential-matrix-decomposition}
is (by definition) 
the upper-left $\product{p_{d,n}(i)}{p_{d,n}(i)}$ submatrix 
of $\differentialblock{A}{n-1}$,
because 
$p_{d,n}(i) \le n-1$ for all~$i$.
Hence, we need only compute $\differentialblock{A}{n-1}$
to determine $\differentialmatrix{\phi_C}$ for arbitrary $d$.

\begin{algorithm}
\label{algorithm:differential-matrix-block}
	Computes the maximal block $\differentialblock{A}{n-1}$
	of the matrix $\differentialmatrix{\phi_C}$.
\begin{description}
\item[Input]
	An integer $n \ge 2$,
	and 
	a polynomial $A(x_1,x_2)$ over $K$ in the form of Eq.~\eqref{eq:A-form}:
	that is, 
	\(
	A(x_1,x_2) 
	= 
	\sum_{i = 0}^r c_i(x_2)x_1^{r-i} 
	\)
	with
	\(\deg c_i \le i\)
	for all~$i$. 
\item[Output]
	The matrix $\differentialblock{A}{n-1}$.
\item[1]
	Let $c_i := 0$ for $r < i < n$.
\item[2]
	For i in $(1,\ldots,n-1)$ do
	\begin{description}
	\item[2a] 
		Set $t_i := -\big( i c_i + \sum_{j=1}^{i-1}c_jt_{i-j}\big)/c_0$ .
	\item[2b]
		For $j$ in $(1,\ldots,n-1)$,
		let $\mu_{i,j} \in K$ be 
		the coefficient of $x_2^{j}$ in $t_i$.
	\end{description}
\item[3]
	Return the matrix $(\mu_{i,j})$.
\end{description}
\end{algorithm}

The representation of the Rosati dual $\dualof{\phi_C}$ is 
\(
	\differentialmatrix{\dualof{\phi_C}}
	=
	\bigoplus_{i=1}^{\Bound{d,n}}\differentialblock{A(x_2,x_1)}{p_{d,n}(i)}
\).
We make the following definition
for notational convenience.

\begin{definition}
We define an involution $\tau$ on $K[x_1,x_2]$
by 
\[
	\tau(A(x_1,x_2)) := A(x_2,x_1) .
\]
\end{definition}

\begin{lemma}
\label{lemma:isogeny-classification}
	With the notation above,
	if 
	\(
		\differentialblock{A}{n-1}
		\differentialblock{\tau(A)}{n-1}
		=
		mI_{n-1}
	\)
	for some integer $m$,
	then
	\(
		\dualof{\phi_C}\phi_C = \multiplication[\Jac{X}]{m}
	\)
	(that is, $\phi_C$ splits multiplication-by-$m$ on~$\Jac{X}$).
	Further,
	if $m$ is squarefree, 
	then $\phi_C$ is a
	\( \isogenytype{m}{\genusfn{d}{n}} \)-isogeny.
\end{lemma}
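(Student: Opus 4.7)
The plan is to verify the identity at the level of the faithful representation $\differentialmatrix{\cdot}$. Since $\differentialmatrix{\multiplication[\Jac{X}]{m}} = mI_{\genusfn{d}{n}}$ and $\differentialmatrix{\dualof{\phi_C}\circ\phi_C} = \differentialmatrix{\phi_C}\differentialmatrix{\dualof{\phi_C}}$ by the composition rule, and since $\differentialmatrix{\cdot}$ is faithful in characteristic zero, the first assertion reduces to the matrix identity
\[
	\differentialmatrix{\phi_C}\differentialmatrix{\dualof{\phi_C}}
	=
	mI_{\genusfn{d}{n}}.
\]

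To prove this identity I would apply the block decomposition of Eq.~\eqref{eq:differential-matrix-decomposition} to both factors, obtaining
$\differentialmatrix{\phi_C}\differentialmatrix{\dualof{\phi_C}}
= \bigoplus_{i=1}^{\Bound{d,n}} \differentialblock{A}{p_{d,n}(i)}\differentialblock{\tau(A)}{p_{d,n}(i)}$.
Each factor is the upper-left submatrix of the corresponding $(n-1)\times(n-1)$ matrix, and since both $\differentialblock{A}{n-1}$ and $\differentialblock{\tau(A)}{n-1}$ are lower triangular, the upper-left $k\times k$ block of their product coincides with the product of their upper-left $k\times k$ blocks. Hence the hypothesis restricts to give $\differentialblock{A}{k}\differentialblock{\tau(A)}{k} = mI_k$ for every $k \le n-1$, and summing over~$i$ produces $mI_{\genusfn{d}{n}}$.

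From $\dualof{\phi_C}\circ\phi_C = \multiplication[\Jac{X}]{m}$ I would then conclude that $\phi_C$ is an isogeny of degree $m^{\genusfn{d}{n}}$: taking degrees gives $\deg(\dualof{\phi_C})\deg(\phi_C) = m^{2\genusfn{d}{n}}$, and the Rosati involution preserves degree (as $\lambda_X$ and $\lambda_Y$ are isomorphisms, so $\deg(\dualof{\phi_C}) = \deg(\hat\phi_C) = \deg(\phi_C)$), so $\deg(\phi_C)^2 = m^{2\genusfn{d}{n}}$. Its kernel is contained in $\Jac{X}[m]$ and is maximal $m$-Weil isotropic, being the kernel of an isogeny of principally polarized abelian varieties (see the introduction).

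For the squarefree case, write $m = p_1\cdots p_r$ with distinct primes. The CRT decomposition $\Jac{X}[m] = \bigoplus_i \Jac{X}[p_i]$ is orthogonal for the Weil pairing, so $\ker\phi_C$ splits as a direct sum of maximal isotropic subspaces of the symplectic $\FF_{p_i}$-vector spaces $\Jac{X}[p_i]\cong\FF_{p_i}^{2\genusfn{d}{n}}$; each such subspace has dimension $\genusfn{d}{n}$, and reassembling by CRT yields $\ker\phi_C\cong\isogenytype{m}{\genusfn{d}{n}}$. The main subtlety is justifying the orthogonal splitting and the dimension count for the maximal isotropics, both of which are standard facts about the Weil pairing on a symplectic $\FF_p$-space, but worth stating explicitly.
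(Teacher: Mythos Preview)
Your proof is correct and follows essentially the same route as the paper: the block decomposition via Eq.~\eqref{eq:differential-matrix-decomposition}, the lower-triangular observation that products of upper-left submatrices coincide with upper-left submatrices of the product, and then faithfulness of $\differentialmatrix{\cdot}$ to conclude $\dualof{\phi_C}\phi_C = \multiplication[\Jac{X}]{m}$. Your degree computation is correct but not needed for the statement, and your CRT treatment of the squarefree case spells out what the paper compresses into ``follows from the nondegeneracy of the Weil pairing.''
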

\begin{proof}
	We have
	\(
		\differentialmatrix{\dualof{\phi_C}\phi_C} 
		=  
		\differentialmatrix{\phi_C}\differentialmatrix{\dualof{\phi_C}}
	\),
	so Eq.~\eqref{eq:differential-matrix-decomposition} 
	implies
	\[
		\differentialmatrix{\dualof{\phi_C}\phi_C}
		=
		\bigoplus_{i=1}^{\Bound{d,n}}
		\big(
		\differentialblock{A}{p_{d,n}(i)}
		\differentialblock{\tau(A)}{p_{d,n}(i)}
		\big)
		.
	\]
	As we noted above,
	$\differentialblock{A}{k}$
	is the upper-left $k\!\times\!k$ submatrix 
	of $\differentialblock{A}{n-1}$
	for all~$k$. 
	Both $\differentialblock{A}{n-1}$
	and $\differentialblock{\tau(A)}{n-1}$
	are lower-triangular,
	so
	$\differentialblock{A}{k}\differentialblock{\tau(A)}{k}$
	is the upper-left $k\!\times\!k$ submatrix of
	$\differentialblock{A}{n-1}\differentialblock{\tau(A)}{n-1}$,
	which is 
	$mI_{n-1}$ by hypothesis.
	Hence
	\(
		\differentialblock{A}{i}
		\differentialblock{\tau(A)}{i} 
		= 
		mI_i
	\) for all $1 \le i \le \Bound{d,n}$, 
	and therefore
	\[
		\differentialmatrix{\dualof{\phi_C}\phi_C}
		=
		\bigoplus_{i = 1}^{\Bound{d,n}} mI_i
		=
		mI_{\genusfn{d}{n}}
		.
	\]
	The faithfulness of $\differentialmatrix{\cdot}$ 
	implies $\dualof{\phi_C}\phi_C = \multiplication[\Jac{X}]{m}$,
	proving the first assertion.
	The kernel of $\phi_C$
	must be a maximal subgroup of $\Jac{X}[m]$
	with respect to the property of being 
	isotropic for the $m$-Weil pairing;
	when $m$ is squarefree,
	the second assertion follows from this
	together with the nondegeneracy
	of the Weil pairing.
\end{proof}

Lemma~\ref{lemma:isogeny-classification}
determines the kernel structure of isogenies
splitting multiplication by a squarefree integer.
In \S\S\ref{family:degree-15}-\ref{family:degree-31},
we will derive isogenies splitting multiplication by~$4$ and~$8$;
we will need another method to determine their kernel structures.
It is helpful to specialize to an isogeny defined over a number field,
and then to view the specialized isogeny 
as an isogeny of complex abelian varieties.

Suppose that $K$ is a number field.
Fix an embedding of $K$ into $\CC$,
and let $\sigma$ denote complex conjugation;
enlarging $K$ if necessary,
we assume $K^\sigma = K$.
Viewing~$\Jac{X}$ and $\Jac{Y}$ as complex tori,
there exist coordinates
on~$\CC^{\genusfn{d}{n}}$
and lattices $\Lambda_X$ and $\Lambda_Y$ in $\CC^{\genusfn{d}{n}}$
such that 
the analytic representation 
$S(\phi_{C}): \CC^{\genusfn{d}{n}}\to\CC^{\genusfn{d}{n}}$
and the rational representation $R(\phi_{C}): \Lambda_X \to \Lambda_Y$
are given by the matrices
\begin{equation}
\label{eq:representations}
	S(\phi_C) = \differentialmatrix{\phi_{C}}
	\quad \text{ and }\quad 
	R(\phi_C) = \left(\begin{array}{cc} 
			\differentialmatrix{\phi_C} & 0 \\ 
			0 & \differentialmatrix{\phi_C}^\sigma \\ 
	\end{array}\right)
	.
\end{equation}
We will compute the structure of $\ker(\phi_{C})$
using the relation
\begin{equation}
\label{eq:kerphiC}
	\ker(\phi_{C})
	\cong
	\coker(R(\phi_{C}))
	\cong
	\Lambda_Y/R(\phi_{C})(\Lambda_X)
	.
\end{equation}
The first step is a restriction of scalars from $K$ to $\QQ$,
since we do not know \emph{a priori} 
how elements of~$K$ should act 
on our unknown lattices $\Lambda_X$ and $\Lambda_Y$.
Suppose that~$R(\phi_C)$ is defined over the ring $\integers{K}$ of integers of $K$
(it is sufficient that $A$ be a polynomial over $\integers{K}$).
Fixing a $\ZZ$-basis $\gamma_1, \ldots, \gamma_e$ of~$\integers{K}$,
we have a faithful representation
$\rho: \integers{K} \to \Mat_{e\times e}(\ZZ)$
(made explicit in Algorithm~\ref{algorithm:Gak}),
which extends to a homomorphism
\[
	\rho_*: \Mat_{2\genusfn{d}{n}\times2\genusfn{d}{n}}(\integers{K}) 
	\longrightarrow 
	\Mat_{2e\genusfn{d}{n}\times2e\genusfn{d}{n}}(\ZZ)
\]
mapping a matrix $(a_{i,j})$ to the block matrix $(\rho(a_{i,j}))$.
We then have
\begin{equation}
\label{eq:rhostarquotient}
	(\Lambda_Y/R(\phi_C)(\Lambda_X))^e 
	\cong 
	\ZZ^{2e\genusfn{d}{n}}/\rho_*(R(\phi_C))(\ZZ^{2e\genusfn{d}{n}})
	,
\end{equation}
so we can compute the isomorphism type of $(\ker\phi_C)^e$
by computing the elementary divisors of $\rho_*(R(\phi_C))$.
Combining Equations~\eqref{eq:differential-matrix-decomposition}
and~\eqref{eq:representations},
and applying $\rho_*$,
we have
\begin{equation}
\label{eq:rhostarimage}
	\rho_*(\rationalrep{\phi_C})
	= 
	\bigoplus_{i=1}^{\Bound{d,n}} 
		\rho_*\Big(
			\differentialblock{A}{p_{d,n}(i)}
			\oplus
			\differentialblock{A}{p_{d,n}(i)}^\sigma
		\Big)
	.
\end{equation}
For each $1 \le k \le n-1$,
we define 
\[
	G(A,k) 
	:= 
	\ZZ^{2ek}
	/
	\big(
		\rho_*\big(\differentialblock{A}{k}
		\oplus
		\differentialblock{A}{k}^\sigma\big)(\ZZ^{2ek})
	\big)
	;
\]
then combining Equations~\eqref{eq:kerphiC}, \eqref{eq:rhostarquotient}, 
and~\eqref{eq:rhostarimage},
we have
\begin{equation}
\label{eq:kernel-relation}
	(\ker(\phi_C))^e
	\cong
	\bigoplus_{i=1}^{\Bound{d,n}}G(A,p_{d,n}(i))
	.
\end{equation}
We can use this relation to deduce the structure of $\ker(\phi_C)$.

\begin{algorithm}
\label{algorithm:Gak}
Computes the sequence $(G(A,k))_{k=1}^{n-1}$.
\begin{description}
\item[Input] 	A polynomial $A \in \integers{K}[x_1,x_2]$,
		and an integer $n$.
\item[Output] 	The sequence of groups $G(A,k)$ for $1 \le k \le n-1$.
\item[1]	Compute $\differentialblock{A}{n-1}$
		using Algorithm~\ref{algorithm:differential-matrix-block}.
\item[2] 	Set $e := [K : \QQ]$,
		and compute a $\ZZ$-basis
		$\gamma_1,\ldots,\gamma_e$ of $\integers{K}$.
\item[3] 	For each $1 \le i \le e$,
		let $\Gamma^{(i)}$
		be the $e\times e$ integer matrix 
		such that
		\[
			\gamma_i\gamma_j
			=
			\sum_{k=1}^e \Gamma^{(i)}_{jk}\gamma_k
			\quad 
			\text{for all}
			\ 1 \le j \le e ,
		\]
		and let
		$\rho: \integers{K} \to \Mat_{e\times e}(\ZZ)$
		be the map 
		$\sum_{i=1}^e a_i\gamma_i \mapsto \sum_{i=1}^ea_i\Gamma^{(i)}$.
\item[4] 	For each $1 \le k \le n-1$,
		\begin{description}
		\item[4a]	Let $M$ be the $2ek\times2ek$ block matrix
				\[
					M 
					:= 
					\left(\rho(\differentialblock{A}{n-1}_{i,j})\right)_{i,j=1}^k
					\oplus
					\left(\rho(\differentialblock{A}{n-1}^\sigma_{i,j})\right)_{i,j=1}^k
					.
				\]
		\item[4b]	Compute the Hermite Normal Form of $M$,
				and 
				let $(d_1,\ldots,d_{2ek})$ be
				its elementary divisors.
		\item[4c]	Set 
				$G(A,k) := \prod_{i=1}^{2ek}(\ZZ/d_i\ZZ)$.
		\end{description}
\item[5] 	Return $(G(A,1),\ldots,G(A,n-1))$.
\end{description}
\end{algorithm}

\begin{remark}
	In our examples, the generic fibres $X$, $Y$, and $C$
	are defined over $K(s_2,\ldots,s_d)$ or $K(s_2,\ldots,s_d,t)$,
	where $K$ is a number field.
	But if $Q_X$ and $Q_Y$ are defined over $K$
	then so is~$A$, 
	so we can apply Algorithm~\ref{algorithm:Gak}
	and use Eq.~\eqref{eq:kernel-relation}
	to deduce the structure of $\ker\phi_C$
	without choosing any particular specialization.
\end{remark}

\section{
	Pairs of polynomials
}
\label{sec:polynomials}

To produce nontrivial examples
in the form of Eq.~\eqref{eq:setup},
we need a source of
pairs of polynomials $(Q_X,Q_Y)$
such that $Q_X(x_1) - Q_Y(x_2)$ is reducible.
For indecomposable~$Q_X$ and~$Q_Y$ 
over~$\CC$,
these pairs 
have been explicitly classified
by Cassou--Nogu\`es and Couveignes~\cite{Cassou-Nogues--Couveignes}.
The pairs
are deeply interesting in their own right:
For further background,
we refer to the work of
Cassels~\cite{Cassels},
Davenport, Lewis, and Schinzel~\cite{Davenport--Lewis--Schinzel,Davenport--Schinzel},
Feit~\cite{Feit-1,Feit-2,Feit-3},
and Fried~\cite{Fried-1,Fried-2,Fried-3}.
An excellent account of the context and importance of these results
can be found on Fried's website~\cite{Fried-site}.
The plane curves cut out by the factors themselves
are also interesting; Avanzi's thesis~\cite{Avanzi-thesis}
provides a good introduction to this topic.

\begin{definition}
\label{def:equivalence}
	We say that polynomials $f_1$ and $f_2$ over $K$ are
	\emph{linear translates}
	if~$f_1(x) = f_2(ax + b)$
	for some $a$, $b$ in $\closure{K}$ with $a$ nonzero.
	We say pairs 
	of polynomials 
	$(f_1,g_1)$ and $(f_2,g_2)$ 
	are \emph{equivalent}
	if there exists some $a$, $b$ in $\closure{K}$
	with $a$ nonzero
	such that $f_1$ and $af_2 + b$ are linear translates
	and $g_1$ and $ag_2 + b$ are linear translates.
\end{definition}

The ``equivalence'' of Definition~\ref{def:equivalence}
is indeed an equivalence relation
on pairs of polynomials.
From the point of view of constructing homomorphisms,
equivalent pairs of polynomials 
give rise to isomorphic homomorphisms of Jacobians.

\begin{proposition}
\label{proposition:equivalence-classes}
	Let $\family{X}$,
	$\family{Y}$,
	and $\family{C}$
	be as in Eq.~\eqref{eq:setup}.
	Suppose that $(Q_Z,Q_W)$ is equivalent to $(Q_X,Q_Y)$
	(so $Q_Z(x) = aQ_X(a_1x + b_1) + b$
	and $Q_W(x) = aQ_Y(a_2x + b_2) + b$
	for some $a$, $b$, $a_1$, $b_1$, $a_2$, and $b_2$ in $\closure{K}$
	with $a$, $a_1$, and $a_2$ nonzero.)
	\begin{enumerate}
	\item	If $A(x_1,x_2)$ is a $\closure{K}$-irreducible factor 
		of $Q_X(x_1)-Q_Y(x_2)$,
		then
		$A'(x_1,x_2) = A(a_1x_1 + b_1,a_2x_2 + b_2)$
		is a $\closure{K}$-irreducible factor 
		of $Q_Z(x_1)-Q_W(x_2)$.
	\item	$(\family{X},\family{Y})$
		is $\closure{K}$-isomorphic
		to $(\family{W}:P_d(y_1)=Q_W(x_1),\family{Z}:P_d(y_2)=Q_Z(x_2))$,
		and $\phi_{\family{C}}$
		is $\closure{K}$-isomorphic to $\phi_{\family{D}}$
		where $\family{D} = \variety{y_1-y_2,A'(x_1,x_2)}\subset\family{W}\times\family{Z}$.
	\end{enumerate}
\end{proposition}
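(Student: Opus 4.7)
The plan is to prove (1) by direct substitution, and then to derive (2) from explicit $\closure{K}$-isomorphisms of families built from the shared linear data $(a,b,a_1,b_1,a_2,b_2)$ of the equivalence.

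For (1), I would first observe that the constants $b$ cancel:
\[
	Q_Z(x_1) - Q_W(x_2)
	=
	a\bigl(Q_X(a_1 x_1 + b_1) - Q_Y(a_2 x_2 + b_2)\bigr).
\]
Since $A(u_1,u_2)$ divides $Q_X(u_1)-Q_Y(u_2)$ in $\closure{K}[u_1,u_2]$, substituting $u_i = a_i x_i + b_i$ immediately shows that $A'(x_1,x_2)$ divides $Q_Z(x_1)-Q_W(x_2)$. Irreducibility of $A'$ over $\closure{K}$ follows from that of $A$, because $(x_1,x_2) \mapsto (a_1 x_1 + b_1, a_2 x_2 + b_2)$ is an invertible $\closure{K}$-linear change of variables.

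For (2), I would construct the isomorphism of families explicitly. Fix $\alpha \in \closure{K}$ with $\alpha^d = 1/a$, and introduce new parameters $s_i' := a\alpha^{d-i} s_i$ for $2 \le i < d$ together with $s_d' := a s_d + b$. Substituting $y_1 = \alpha y_1'$ and $x_1 = a_1 x_1' + b_1$ into $P_d(y_1) = Q_X(x_1)$ and multiplying through by $a$ turns it into $\tilde P_d(y_1') = Q_Z(x_1')$, where $\tilde P_d$ has the same shape as $P_d$ but with the new parameters $(s_i')$. The analogous substitution on the second factor, using $(a_2,b_2)$ in the $x$-coordinate and the \emph{same} scalar $\alpha$ in the $y$-coordinate, transforms $\family{Y}$ into the family defined by $Q_W$ with the \emph{same} new parameters. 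Since $(s_i) \mapsto (s_i')$ is an invertible affine map on the parameter space, this yields the desired $\closure{K}$-isomorphism of parametrized pairs.

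Finally, to match the correspondences, I would check that the product map $\Psi: \product{\family{X}}{\family{Y}} \to \product{\family{W}}{\family{Z}}$ (given on each factor by the coordinate changes above, with the common $y$-scaling $\alpha$) sends $\family{C}$ to $\family{D}$: the equation $y_1 - y_2 = 0$ is preserved because both coordinates are rescaled by the same $\alpha$, and by part (1) the vanishing of $A(x_1,x_2)$ pulls back to the vanishing of $A'(x_1',x_2')$. Hence $\phi_{\family{C}}$ and $\phi_{\family{D}}$ agree up to pre- and post-composition with the Jacobian isomorphisms induced by $\Psi$, which gives the claimed $\closure{K}$-isomorphism of homomorphisms. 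The only nontrivial ingredient, and the reason the isomorphism lives only over $\closure{K}$, is the need to extract a $d$-th root of $1/a$; the main bookkeeping check is that the single scalar $\alpha$ and the single affine change $(s_i) \mapsto (s_i')$ work simultaneously on both factors, which is guaranteed by the fact that the constants $a$ and $b$ appearing in $Q_Z$ and $Q_W$ are the same.
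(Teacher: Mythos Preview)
Your proposal is correct and follows essentially the same approach as the paper: part~(1) by direct substitution, and part~(2) by choosing $\alpha$ with $\alpha^d = a^{-1}$ and writing down the resulting reparametrization of $(s_2,\ldots,s_d)$ together with the coordinate change $(x_i,y_i)\mapsto(a_ix_i+b_i,\alpha y_i)$, which carries $\family{C}$ to $\family{D}$. Your write-up is actually more detailed than the paper's (which dismisses (1) as a ``straightforward symbolic exercise'' and for (2) simply records the map); the only cosmetic wrinkle is the $(\family{W},\family{Z})$ versus $(\family{Z},\family{W})$ ordering, which is a labeling quirk in the statement rather than a flaw in your argument.
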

\begin{proof}
	Part (1) is a straightforward symbolic exercise.
	For Part (2),
	let~$\alpha := a^{-1/d}$.
	The family 
	$(\family{Z}, \family{W})$
	is $\closure{K}$-isomorphic to $(\family{X},\family{Y})$ via
	\[
	\begin{array}{r@{\;\longmapsto\;}l}
		(s_2,\ldots,s_d) 
		& 
		(\alpha^2 s_2,\ldots, \alpha^{d-1}s_{d-1}, \alpha^d s_d - b/a), 
		\\
		(x_i,y_i) 
		& 
		(a_ix_i+b_i,\alpha y_i) .
	\end{array}
	\]
	This induces
	a $\closure{K}$-isomorphism between $\family{C}$ and $\family{D}$,
	so $\phi_{\family{C}} \cong \phi_{\family{D}}$.
\end{proof}

The classification of pairs of indecomposable polynomials $(Q_X,Q_Y)$
over $\CC$
such that $Q_X(x_1)-Q_Y(x_2)$
has a nontrivial factor
splits naturally into two parts,
according to whether $Q_X$ and~$Q_Y$ are linear translates or not.
Observe that if $Q_X$ and $Q_Y$ are linear translates,
then by Proposition~\ref{proposition:equivalence-classes}(1)
we reduce to the case $Q_Y = Q_X$.
We always have a factor $x_1 - x_2$ of $Q_X(x_1)-Q_X(x_2)$;
this corresponds to the fact that the endomorphism ring of $\Jac{X}$
always contains $\ZZ$ (cf.~Example~\ref{example:diagonal}).

\begin{theorem}[Fried~\cite{Fried-1}]
\label{theorem:linear-translate-theorem}
	Let $Q_X$ be an indecomposable polynomial
	of degree at least~$3$ over $\CC$.
	Then 
	$(Q_X(x_1) - Q_X(x_2))/(x_1-x_2)$ is $\closure{K}$-reducible
	if and only if $(Q_X,Q_X)$ is equivalent to either 
	\begin{enumerate}
		\item	the pair $(x^n,x^n)$
			for some odd prime $n$, or
		\item	the pair $(D_n(x,1),D_n(x,1))$
			for some odd prime $n$,
			where $D_n(x,1)$ is 
			the $n^\mathrm{th}$ Dickson polynomial	
			of the first kind with parameter $1$
			(see Remark~\ref{remark:Dickson}).
	\end{enumerate}
\end{theorem}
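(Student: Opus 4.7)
The plan is to translate the reducibility condition into a group-theoretic statement via monodromy, and then invoke the classification of primitive permutation groups containing a transitive cyclic subgroup.

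First, I would let $G$ denote the monodromy group of $Q_X(x) - t$ over $\CC(t)$, viewed as a permutation group on the set $\Omega$ of $n$ roots of $Q_X(x) - t$, and let $G_1$ be the stabilizer of a distinguished root. A standard Galois correspondence puts the $\closure{K}$-irreducible factors of $Q_X(x_1)-Q_X(x_2) \in \CC(x_2)[x_1]$ into bijection with the orbits of $G_1$ on $\Omega$, with the factor $x_1-x_2$ corresponding to the singleton orbit at the distinguished root. Hence $(Q_X(x_1)-Q_X(x_2))/(x_1-x_2)$ is reducible precisely when $G_1$ has more than one orbit on $\Omega \setminus \{1\}$, equivalently when $G$ is not $2$-transitive on $\Omega$.

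Second, I would exploit two special properties of polynomial monodromy: indecomposability of $Q_X$ is equivalent to primitivity of the $G$-action on $\Omega$; and since $Q_X$ is totally ramified at $\infty$, the inertia subgroup there is a cyclic subgroup of $G$ acting transitively (in fact regularly) on $\Omega$. So we must classify primitive permutation groups of degree $n \geq 3$ that contain a transitive cyclic subgroup but fail to be $2$-transitive.

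The core step is this classification, which is due to Burnside and Schur (and, in the final form needed, the classification of finite simple groups): a primitive permutation group containing a regular cyclic subgroup is either $2$-transitive of an explicitly known type, or else of prime degree $p$ with group contained in the normalizer of a Sylow $p$-subgroup, yielding $\mathbb{Z}/p \trianglelefteq G \subseteq \mathrm{AGL}_1(\mathbb{F}_p)$. Removing the $2$-transitive possibilities — ruled out by our hypothesis — leaves only the subgroups $G = \mathbb{Z}/p$ (the cyclic case) and $G = D_p$ (the dihedral case), with $p$ an odd prime; all larger subgroups of $\mathrm{AGL}_1(\mathbb{F}_p)$ act $2$-transitively.

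Finally, I would identify the polynomials realizing each monodromy: cyclic monodromy of prime degree $n$ forces $Q_X$ to be equivalent to $x^n$, because its Galois closure over $\CC(t)$ is then $\CC(t^{1/n})$ and a linear change of variables absorbs the Kummer constant; dihedral monodromy of prime degree $n$ forces $Q_X$ to be equivalent to the Dickson polynomial $D_n(x,1)$, characterized by the identity $D_n(u+u^{-1},1) = u^n + u^{-n}$ (the dihedral covering being precisely $u \mapsto u+u^{-1}$ followed by $D_n$). The main obstacle is the group-theoretic classification in the third paragraph — the other steps are a routine Galois correspondence, the standard analysis of inertia at $\infty$ for a polynomial map, and well-known characterizations of the power and Dickson polynomials by their monodromy.
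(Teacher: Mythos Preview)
The paper does not prove this theorem; it simply attributes it to Fried and cites~\cite{Fried-1}.  Your overall strategy --- translate reducibility into failure of $2$-transitivity of the monodromy group, then use Schur's theorem to land inside $\mathrm{AGL}_1(\mathbb{F}_p)$ for some prime $p$, then identify the polynomial from its monodromy --- is exactly Fried's approach, and the first two steps and the last step are fine.

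There is, however, a genuine gap in your third step.  The assertion that ``all larger subgroups of $\mathrm{AGL}_1(\mathbb{F}_p)$ act $2$-transitively'' is false: a subgroup $C_p \rtimes H$ with $H \le C_{p-1}$ is $2$-transitive only when $H = C_{p-1}$, i.e.\ only for the full $\mathrm{AGL}_1(\mathbb{F}_p)$.  For instance, when $p = 7$ the group $C_7 \rtimes C_3$ is primitive, contains a regular $7$-cycle, and is not $2$-transitive (a point stabilizer $C_3$ has orbit sizes $1,3,3$).  So purely group-theoretically you cannot exclude such intermediate groups.

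What rules them out is the genus-$0$ constraint coming from Riemann--Hurwitz, which you have not used.  Writing the branch-cycle description as $\sigma_1\cdots\sigma_r\sigma_\infty = 1$ with $\sigma_\infty$ a $p$-cycle, the genus-$0$ condition forces the finite ramification to total $p-1$.  An element of $\mathrm{AGL}_1(\mathbb{F}_p)\setminus C_p$ of order $d$ contributes $(p-1)(d-1)/d$, so one gets $\sum_i (1 - 1/d_i) = 1$, which forces either $r=1$ with $\sigma_1$ a $p$-cycle (hence $G = C_p$) or $r=2$ with $d_1 = d_2 = 2$ (hence $G = D_p$).  This is the missing ingredient.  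A minor side remark: the classification of finite simple groups is not needed here --- Schur's theorem suffices --- and the paper itself (Remark~\ref{remark:CFSG}) reserves the CFSG dependence for Theorem~\ref{theorem:CNC-theorem}, not for this one.
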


\begin{theorem}[Cassou--Nogu\`es and Couveignes~\cite{Cassou-Nogues--Couveignes}]
\label{theorem:CNC-theorem}
	Let $(Q_X,Q_Y)$ be indecomposable polynomials 
	of degree at least~$3$ over $\CC$,
	and let $\sigma$ denote complex conjugation.
	Assume the classification of finite simple groups
	(see Remark~\ref{remark:CFSG}).
	If $Q_X$ and $Q_Y$ are not linear translates,
	then $Q_X(x_1) - Q_Y(x_2)$ is reducible
	if and only if 
	$(Q_X,Q_Y)$ is equivalent 
	(possibly after exchanging $Q_X$ and $Q_Y$)
	to
	\begin{enumerate}
		\item	a pair in the 
			one-parameter family $(f_{7},f_{7}^\sigma)$
			defined in \S\ref{family:degree-7},
			or
		\item	the pair $(f_{11},f_{11}^\sigma)$
			defined in \S\ref{family:degree-11},
			or
		\item	a pair in the
			one-parameter family $(f_{13},f_{13}^\sigma)$
			defined in \S\ref{family:degree-13},~or
		\item	a pair in the
			one-parameter family $(f_{15},-f_{15}^\sigma)$
			defined in~\S\ref{family:degree-15}, or
		\item	the pair $(f_{21},f_{21}^\sigma)$
			defined in \S\ref{family:degree-21},
			or
		\item	the pair $(f_{31},f_{31}^\sigma)$
			defined in \S\ref{family:degree-31}.
	\end{enumerate}
\end{theorem}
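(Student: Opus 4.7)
The plan is to translate reducibility of $Q_X(x_1)-Q_Y(x_2)$ into a constraint on the common monodromy group of the pair, apply the classification of finite simple groups (CFSG) to enumerate the candidate groups, and then realize each admissible candidate as an explicit polynomial pair.

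First, let $t$ be transcendental over $\CC$, let $L$ be the Galois closure over $\CC(t)$ of the compositum $\CC(x_1)\cdot\CC(x_2)$ in which $Q_X(x_1)=t=Q_Y(x_2)$, and set $G=\Gal{L/\CC(t)}$ with $H_i\le G$ the stabilizers of $x_i$. Indecomposability of $Q_X$ and $Q_Y$ forces $H_1$ and $H_2$ to be maximal in $G$. The irreducible factors of $Q_X(x_1)-Q_Y(x_2)$ over $\CC$ are in bijection with the $(H_1,H_2)$-double cosets in $G$, so reducibility is equivalent to the existence of at least two such double cosets (equivalently, to the permutation characters satisfying $\langle 1_{H_1}^G, 1_{H_2}^G\rangle > 1$). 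Crucially, because $Q_X$ and $Q_Y$ are polynomials, the inertia subgroup at $\infty$ in each factor is cyclic of order $n=\deg Q_X=\deg Q_Y$ generated by an $n$-cycle; thus $G$ is a primitive permutation group of degree $n$ that contains an $n$-cycle.

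Second, invoke the Feit--Jones classification (a consequence of CFSG) of primitive permutation groups of degree $n$ containing an $n$-cycle: they are the symmetric and alternating groups; for $n=p$ prime, certain subgroups of $A\Gamma L_1(\FF_p)$; the projective groups $\PSL{d}{\FF_q}\le G\le\PGammaL{d}{\FF_q}$ acting on the points of $\mathbb{P}^{d-1}(\FF_q)$ (with $n=(q^d-1)/(q-1)$); and the Mathieu groups $M_{11}$ and $M_{23}$. For each such $G$, enumerate pairs of maximal subgroups $(H_1,H_2)$ of index $n$, verify the double-coset condition, and discard $G$-conjugate pairs (which correspond to Fried's linear-translate case, Theorem~\ref{theorem:linear-translate-theorem}). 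For each surviving datum, apply Riemann's existence theorem together with Fried's branch-cycle argument to test whether the prescribed monodromy can be realized by a cover $\mathbb{P}^1\to\mathbb{P}^1$ with an $n$-cycle as inertia at $\infty$. Only finitely many tuples survive; an orbit count by ramification signatures yields exactly the degrees $n\in\{7,11,13,15,21,31\}$.

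Finally, for each surviving tuple the explicit pair $(Q_X,Q_Y)$ is reconstructed by parametrizing the corresponding (typically one-dimensional) Hurwitz space, or, in the rigid cases, by solving directly for a rational parametrization of the cover from its branch data; since the pair of subgroups $(H_1,H_2)$ in the remaining cases is swapped by an outer automorphism of $G$ defined over $\QQ(\zeta)$ for some root of unity $\zeta$, this yields the Galois pairings $Q_Y=Q_X^\sigma$. The main obstacle is this last realization step: while the group-theoretic enumeration is now essentially mechanical through CFSG, certifying that each candidate datum really arises from a polynomial cover defined over a number field and writing the defining polynomials in closed form is intricate and constitutes the bulk of~\cite{Cassou-Nogues--Couveignes}. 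The proof therefore reduces to citing their classification.
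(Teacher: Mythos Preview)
The paper does not prove this theorem: it is stated with attribution to Cassou--Nogu\`es and Couveignes and used as a black box, so there is no ``paper's own proof'' to compare against. Your proposal likewise ends by deferring to~\cite{Cassou-Nogues--Couveignes}, which is exactly what the paper does; the monodromy/double-coset sketch you give beforehand is a reasonable summary of the strategy in that reference, but it goes beyond anything the present paper supplies.
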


It follows from Proposition~\ref{proposition:equivalence-classes}
that we can give a complete treatment 
of homomorphisms induced by
correspondences in the form of Eq.~\eqref{eq:setup}
by applying our constructions to the polynomials 
of
Theorems~\ref{theorem:linear-translate-theorem}
and~\ref{theorem:CNC-theorem}.
We treat $x^n$ and $D_n(x,1)$ 
in~\S\ref{section:endomorphisms},
and the polynomials
$f_7$, $f_{11}$, $f_{13}$, $f_{15}$, $f_{21}$, and $f_{31}$
from Theorem~\ref{theorem:CNC-theorem}
in \S\S\ref{family:degree-7}-\ref{family:degree-31}.

\begin{remark}
\label{remark:indecomposability}
	The restriction to indecomposable polynomials
	is not too heavy,
	since we are primarily interested in
	isogenies of absolutely simple Jacobians.
	If $Q_X(x) = Q_1(Q_2(x))$
	with $\deg Q_2 > 1$,
	then
	we have 
	a $(\deg Q_2)$-uple cover $(x,y)\mapsto(Q_2(x),y)$
	from $\family{X}$
	to $\family{X}': P_d(y) = Q_1(x)$.
	If $d > 2$ and $\deg Q_1 > 1$,
	or if $d = 2$ and $\deg Q_1 > 2$,
	then~$\family{X}'$ has positive genus
	and~$\Jacfamily{\family{X}'}$ 
	is a nontrivial isogeny factor of $\Jacfamily{\family{X}}$,
	so~$\Jacfamily{\family{X}}$ is reducible.
	If $d = \deg Q_1 = 2$,
	then $\Jacfamily{\family{X}}$ is not necessarily reducible:
	a partial treatment of 
	this case appears as the \emph{quadratic construction}
	in~\cite{Smith-hyperelliptic}.
\end{remark}

\begin{remark}
\label{remark:CFSG}
	Theorem~\ref{theorem:CNC-theorem}
	assumes the classification of finite simple groups~\cite{Gorenstein--Lyons--Solomon}.
	The classification is only required 
	to prove the completeness of 
	the list of pairs of polynomials 
	(and not for the existence
	of the factorizations).  
	In particular,
	Theorem~\ref{theorem:main}
	does not depend on
	the classification of finite simple groups;
	but 
	one corollary 
	of the classification
	is that 
	every isogeny induced by a correspondence 
	in the form of~Eq.~\eqref{eq:setup}
	is isomorphic to
	a composition of endomorphisms from the families in \S\ref{section:endomorphisms}
	and isogenies from the families in Theorem~\ref{theorem:main}.
\end{remark}

\begin{remark}
\label{remark:Dickson}
	Recall that $D_n(x,a)$ is
	the $n^\mathrm{th}$ Dickson polynomial of the first kind with parameter~$a$
	(see~\cite{Lidl-Mullen-Turnwald}):
	that is, the unique polynomial of degree $n$
	such that $D_n(x + a/x,a) = x^n + (a/x)^n$.
	In characteristic zero
	$D_n(x,1) = 2T_n(x/2)$,
	where $T_n$ is the~$n^\mathrm{th}$ classical Chebyshev polynomial.
	We have $D_n(x,a) = a^{n/2}D_n(a^{-1/2}x,1)$
	when $a \not= 0$,
	so $(D_n(x,a),D_n(x,a))$
	is equivalent to $(D_n(x,1),D_n(x,1))$.
	On the other hand $D_n(x,0) = x^n$,
	so Theorem~\ref{theorem:linear-translate-theorem}(1)
	is essentially a specialization of 
	Theorem~\ref{theorem:linear-translate-theorem}(2).
\end{remark}

\section{
	Families with explicit Complex and Real Multiplication
}
\label{section:endomorphisms}

\label{family:cyclotomic-CM}

We now put our techniques into practice.
First,
consider Theorem~\ref{theorem:linear-translate-theorem}(1):
Let $Q_X(x) = Q_Y(x) = x^n$
for some odd prime~$n$.
For each $d > 1$,
we derive a family
\[
	\family{Z}_{d,n}: P_d(y) = x^n
\]
of curves of genus $\genusfn{d}{n}$
with an automorphism
\( \zeta : (x,y) \mapsto (\zeta_n x, y) \) of order~$n$.
We say $\family{Z}_{d,n}$ is \emph{superelliptic}
if $n\nmid d$. 
The family has $d-2$ moduli:
restricting the isomorphisms of \S\ref{sec:moduli} to $\family{Z}_{d,n}$,
we see that every isomorphism class in $\family{Z}_{d,n}$
contains a unique representative with $s_2 = 1$.
We identify $\zeta$ with its induced endomorphism
of~$\Jacfamily{\family{Z}_{d,n}}$;
its minimal polynomial is the $n^\mathrm{th}$ cyclotomic polynomial.
Recalling that
\[
	x_1^n - x_2^n = \prod_{i=0}^{n-1}(\zeta_n^ix_1 - x_2) 
	,
\]
we consider the correspondences
\[ 
	\family{C}_i := \variety{y_1-y_2,\zeta_n^ix_1-x_2 } 
	\subset 
	\product[\QQ(\zeta_n)(s_2,\ldots,s_d)]{\family{Z}_{d,n}}{\family{Z}_{d,n}}
	.
\]
We have 
$\family{C}_i = (\product{\mathrm{Id}}{\zeta^i})(\family{Z}_{d,n})$
so $\phi_{\family{C}_i} = \zeta^i$
(cf.~Example~\ref{example:diagonal});
the 
\(
	\family{C}_i 
\)
therefore generate a subring of $\End(\Jacfamily{\family{Z}_{d,n}})$
isomorphic to $\ZZ[\zeta_n]$.

Now consider Theorem~\ref{theorem:linear-translate-theorem}(2):
$Q_X(x) = Q_Y(x) = D_n(x,1)$
for some odd prime $n$.
For each $d > 1$, 
we derive a family
\[
	\family{W}_{d,n}: P_d(y_i) = D_n(x_i,1) 
\]
of curves of genus $\genusfn{d}{n}$
with $d-1$ moduli. 
In~\cite[Theorem~3.12]{Lidl-Mullen-Turnwald}
we see that
\[
	D_n(x_1,1) - D_n(x_2,1)
	=
	(x_1 - x_2)
	\!\!\!\!\prod_{i=1}^{(n-1)/2}\!\!\!\!
	A_{n,i}(x_1,x_2)
	,
\]
where
\[
	A_{n,i}(x_1,x_2) 
	:= 
	x_1^2 + x_2^2 
	- (\zeta_n^i + \zeta_n^{-i})x_1x_2 
	+ (\zeta_n^i - \zeta_n^{-i})^2
	.
\]

\begin{proposition} 
\label{proposition:RM-family}
	The endomorphisms of $\Jacfamily{\family{W}_{d,n}}$
	induced by the correspondences
	\[ 
		C_i := \variety{y_1-y_2, A_{n,i}(x_1,x_2) } 
		\subset 
		\product[\QQ(\zeta_n)(s_2,\ldots,s_d)]{\family{W}_{d,n}}{\family{W}_{d,n}}
	\]
	generate a subring of $\End(\Jacfamily{\family{W}_{d,n}})$
	isomorphic to $\ZZ[\zeta_n + \zeta_n^{-1}]$.
\end{proposition}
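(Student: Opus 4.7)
The plan is to exhibit an injective ring homomorphism $\Phi\colon \ZZ[\zeta_n+\zeta_n^{-1}] \to \End(\Jacfamily{\family{W}_{d,n}})$ sending $\alpha_i := \zeta_n^i + \zeta_n^{-i}$ to $\phi_{C_i}$ for each $1\le i\le (n-1)/2$. Recall that $\alpha_1,\ldots,\alpha_{(n-1)/2}$ form a $\ZZ$-basis of $\ZZ[\zeta_n+\zeta_n^{-1}]$ (using the relation $1 + \alpha_1 + \cdots + \alpha_{(n-1)/2} = 0$), with multiplication $\alpha_i\alpha_j = \alpha_{i+j} + \alpha_{|i-j|}$ under the conventions $\alpha_0 := 2$ and $\alpha_k := \alpha_{n-k}$; hence, once $\Phi$ is well-defined and injective, its image will be precisely the subring generated by the $\phi_{C_i}$.

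To establish that $\Phi$ is a well-defined ring homomorphism, I would verify the corresponding identity for correspondences:
\[
    C_i \times_{\family{W}_{d,n}} C_j \equiv C_{i+j} + C_{|i-j|}
\]
on $\family{W}_{d,n} \times \family{W}_{d,n}$, modulo fibres and principal divisors (with $C_0 := 2\Delta$, since $A_{n,0}(x_1,x_3) = (x_1-x_3)^2$, and $C_k := C_{n-k}$). The key tool is the Dickson identity $D_n(u+u^{-1},1) = u^n + u^{-n}$ together with the factorisation $u_1^n - u_2^{\pm n} = \prod_{k=0}^{n-1}(u_1 - \zeta_n^k u_2^{\pm 1})$: setting $x_2 = u + u^{-1}$ forces $A_{n,i}(x_1,x_2) = 0$ to parametrise as $x_1 = \zeta_n^{\epsilon i} u + \zeta_n^{-\epsilon i} u^{-1}$ for $\epsilon = \pm 1$. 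Composing two such parametrisations through a common $u$ and substituting $v := \zeta_n^{\epsilon_1 i} u$, the pair $(x_1,x_3)$ takes the form $(v + v^{-1},\, \zeta_n^{k} v + \zeta_n^{-k} v^{-1})$ with $k := \epsilon_2 j - \epsilon_1 i$; as $(\epsilon_1,\epsilon_2)$ ranges over $\{\pm 1\}^2$, $k$ takes the four values $\pm(i\pm j)$, which the symmetry $A_{n,k} = A_{n,-k}$ collapses to the two factors $A_{n,i+j}$ and $A_{n,|i-j|}$. Combined with the composition rule $\phi_{C\times_Y D} = \phi_D\circ\phi_C$ of Section~\ref{section:correspondences} and $\phi_{2\Delta} = 2\cdot\mathrm{id}$ from Example~\ref{example:diagonal}, this yields $\phi_{C_i}\phi_{C_j} = \phi_{C_{i+j}} + \phi_{C_{|i-j|}}$.

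For injectivity, I would apply Algorithm~\ref{algorithm:differential-matrix-block} to $A_{n,i}(x_1,x_2) = x_1^2 - \alpha_i x_2 x_1 + (x_2^2 + \alpha_i^2 - 4)$: with $c_0 = 1$, $c_1 = -\alpha_i x_2$, $c_2 = x_2^2 + \alpha_i^2 - 4$, a single Newton--Girard step gives $t_1 = \alpha_i x_2$, so $\mu_{1,1} = \alpha_i$ and $\mu_{1,k} = 0$ for $k > 1$. Hence every differential $\omega_{l,1}$ (for $1 \le l \le \Bound{d,n}$) is an eigenvector of $\differentialmatrix{\phi_{C_i}}$ with eigenvalue $\alpha_i$. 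The composition of $\Phi$ with restriction of $\differentialmatrix{\cdot}$ to the eigenline $K\cdot\omega_{l,1}$ is the natural inclusion $\ZZ[\zeta_n+\zeta_n^{-1}] \hookrightarrow K$, which is manifestly injective; since $\differentialmatrix{\cdot}$ is a faithful representation in characteristic zero, $\Phi$ itself must be injective.

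The main obstacle will be the rigorous verification of the correspondence composition identity: one must carefully treat the degenerate case $i = j$, where the $|i-j| = 0$ contribution must come out as the doubled diagonal $2\Delta$ (consistent with $\alpha_0 = 2$), invoke the symmetry $A_{n,k} = A_{n,n-k}$ to redirect the case $i + j > (n-1)/2$ back to the labelled factors, and confirm that the scheme-theoretic fibre product $C_i \times_{\family{W}_{d,n}} C_j$ carries no unexpected excess components beyond those exhibited by the parametrisation above.
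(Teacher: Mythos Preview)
Your approach differs from the paper's in a way worth noting. The paper promotes your parametrisation $x=u+u^{-1}$ to an actual double cover $\pi\colon\family{U}_{d,n}\to\family{W}_{d,n}$, where $\family{U}_{d,n}\colon P_d(v)=u^n+u^{-n}$ carries the order-$n$ automorphism $\zeta\colon(u,v)\mapsto(\zeta_n u,v)$. A short divisor computation gives $\pi_*(\zeta^i+\zeta^{-i})\pi^*=[2]\phi_{C_i}$, and pushing the minimal polynomial $m_i$ of $\alpha_i$ through this identity (using $\pi_*\pi^*=[2]$) yields $m_i(\phi_{C_i})=0$ in one stroke. Your route via fibre products of correspondences and the differential eigenvalue is more combinatorial and makes the whole multiplication table $\phi_{C_i}\phi_{C_j}=\phi_{C_{i+j}}+\phi_{C_{|i-j|}}$ visible, whereas the paper's argument is shorter and lands directly on the needed minimal-polynomial relation.

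There is, however, a real gap in your well-definedness step. The composition identities together with $\phi_{C_0}=[2]$ and $\phi_{C_k}=\phi_{C_{n-k}}$ only force $\phi_{C_1}$ to satisfy the degree-$\tfrac{n+1}{2}$ relation $(\phi_{C_1}-[2])\,m_1(\phi_{C_1})=0$: the Chebyshev-type recursion with the boundary $c_{(n+1)/2}=c_{(n-1)/2}$ has the spurious extra root $T=2$. Equivalently, your $\ZZ$-linear $\Phi$ sends $1=-\sum_k\alpha_k$ to $-\sum_k\phi_{C_k}$, and the $i=j$ case of multiplicativity needs this to equal $[1]$, which your argument never establishes. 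The fix is immediate from tools already on the table: the factorisation $D_n(x_1,1)-D_n(x_2,1)=(x_1-x_2)\prod_k A_{n,k}(x_1,x_2)$ shows that $\Delta+\sum_k C_k$ is the hypersurface $V(y_1-y_2)$ on $\family{W}_{d,n}\times\family{W}_{d,n}$, so by Example~\ref{example:hypersurface} one has $[1]+\sum_k\phi_{C_k}=0$, which is exactly the missing relation.
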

\begin{proof}
	The family $\family{U}_{d,n}: P_d(v) = u^n + 1/u^n$
	has
	an involution $\iota: (u,v) \mapsto (1/u,v)$
	and an automorphism $\zeta: (u,v) \mapsto (\zeta_n u, v)$.
	The double cover
	$\pi: \family{U}_{d,n} \to \family{W}_{d,n}$
	defined by
	$(u,v) \mapsto (u + u^{-1},v)$
	is 
	the quotient of $\family{U}_{d,n}$
	by $\subgrp{\iota}$,
	and
	$\pi_*\pi^* = \multiplication[\Jacfamily{\family{W}_{d,n}}]{2}$.
	Let $(x,y)$ be a generic point on $\family{W}_{d,n}$.
	On the level of divisors we have
	\[
		\phi_{C_i}((x,y)) = (\alpha_1, y) + (\alpha_2, y) ,
	\]
	where $\alpha_1+\alpha_2 = (\zeta_n^{i} + \zeta_n^{-i})x$
	and $\alpha_1\alpha_2 = x^2 + (\zeta_n^{i} - \zeta_n^{-i})^2$.
	On the other hand, 
	\[
		\pi_*(\zeta^{i} + \zeta^{-i})\pi^*((x,y)) 	
		=
		2(\zeta_n^{i}\beta+\zeta_n^{-i}\beta^{-1},y)
			+ 2(\zeta_n^{-i}\beta+\zeta_n^{i}\beta^{-1},y) ,
	\]
	where $\beta + \beta^{-1} = x$.
	But
	\(
		\{
		\zeta_n^{i}\beta+\zeta_n^{-i}\beta^{-1}
		, 
		\zeta_n^{-i}\beta+\zeta_n^{i}\beta^{-1}
		\}
		=
		\{\alpha_1 , \alpha_2\}
	\),
	so $\pi_*(\zeta^{i} + \zeta^{-i})\pi^*((x,y)) = 2\phi_{C_i}((x,y))$,
	and hence
	\[
		\pi_*(\zeta^i + \zeta^{-i})\pi^* = [2]\phi_{C_i} .
	\]
	Let $m_i$ be the minimal polynomial of $\zeta_n^i + \zeta_n^{-i}$;
	it is irreducible, and $m_i(\zeta^i + \zeta^{-i}) = 0$. 
	Working in $\QQ(\phi_{C_i})$,
	we have 
	\[
		2m_i(\phi_{C_i}) 
		=
		2m_i(\frac{1}{2}\pi_*(\zeta^i + \zeta^{-i})\pi^*)
		=
		\pi_*m_i(\zeta^i + \zeta^{-i})\pi^*
		=
		0
		;
	\]
	hence $m_i(\phi_{C_i}) = 0$,
	and the proposition follows.
\end{proof}

\begin{remark}
	The family $\family{W}_{2,n}$
	is isomorphic to the family $\family{C}_t$
	of
	hyperelliptic curves of genus $(n-1)/2$
	described by Tautz, Top, and Verberkmoes~\cite{Tautz--Top--Verberkmoes}.
	Their families
	extend earlier families of Mestre~\cite{Mestre--RM},
	replacing subgroups of the $n$-torsion of elliptic curves 
	with the group of $n^\mathrm{th}$ roots of unity in $\QQbar$.
	Our construction of $\family{W}_{d,n}$
	readily generalizes in the other direction
	to give more families of Jacobians
	in genus~$\genusfn{d}{n}$
	with Real Multiplication 
	by~$\ZZ[\zeta_n + \zeta_n^{-1}]$
	(though for these families,
	the Dickson polynomials are replaced by certain rational functions).
\end{remark}

\section{
	Genus $\genusfn{d}{7}$ families 
	from Theorem~\ref{theorem:CNC-theorem} (1)
}
\label{family:degree-7}

Consider Theorem~\ref{theorem:CNC-theorem}(1):
Let~$\alpha_7$ be an element of $\QQbar$
satisfying
\[
	\alpha_7^2 + \alpha_7 + 2 = 0 ,
\]
so $\QQ(\alpha_7) = \QQ(\sqrt{-7})$.
The involution $\sigma: \alpha_7 \mapsto 2/\alpha_7$ 
generates $\Gal{\QQ(\alpha_7)/\QQ}$.
Let~$t$ be a free parameter,
and let $f_{7}$ be the polynomial 
over $\QQ(\alpha_7)[t]$
defined by
\[
	\begin{array}{r@{\;}l}
		f_{7}(x) 
		:= 
		&
		x^7 
		- 7\alpha_7t x^5 
		- 7\alpha_7t x^4 
		- 7(2 \alpha_7 + 5)t^2 x^3 
		- 7(4 \alpha_7 + 6)t^2 x^2 
		\\
		&
		{} + 7((3 \alpha_7 - 2)t^3 - (\alpha_7 + 3)t^2) x 
		+ 7\alpha_7t^3
	\end{array}
\]
(so $f_{7} = 7g$,
where $g$ is
the polynomial of~\cite[\S5.1]{Cassou-Nogues--Couveignes}
with $a_2 = \alpha_7$ and $T = t$).
We have a factorization 
$f_{7}(x_1) - f_{7}^\sigma(x_2) = A_7(x_1,x_2)B_7(x_1,x_2)$,
where
\[
	A_{7}
	= 
	x_1^3 - x_2^3 - \alpha_7^\sigma x_1^2x_2 + \alpha_7 x_1x_2^2 
	+ (3-2\alpha_7^\sigma)tx_1 - (3-2\alpha_7)tx_2 + (\alpha_7 - \alpha_7^\sigma)t
	.
\]
Both $A_{7}$ and $B_{7}$ 
are absolutely irreducible,
and
$\tau(A_{7}) = -A_{7}^\sigma$ and $\tau(B_{7}) = B_{7}^\sigma$.


\begin{proposition}
\label{proposition:phi-d-7}
	Let $d > 1$ be an integer, 
	and consider 
	the families
	defined by
	\[
	\begin{array}{c}
		\family{X}_{d,7} : P_d(y_1) = f_{7}(x_1) 
		,\quad 
		\family{Y}_{d,7} : P_d(y_2) = f_{7}^\sigma(x_2) 
		,\medskip \\
		\family{C}_{d,7} = \variety{y_1-y_2,A_{7}(x_1,x_2)}
		\subset
		\product[\QQ(\alpha_7)(s_2,\ldots,s_d,t)]{\family{X}_{d,7}}{\family{Y}_{d,7}}
		.
	\end{array}
	\]
	The induced homomorphism
	\(
		\phi_{d,7}
		=
		\phi_{\family{C}_{d,7}}: 
		\Jacfamily{\family{X}_{d,7}} 
		\to~\Jacfamily{\family{Y}_{d,7}}
	\)
	is a $d$-dimensional family
	of $\isogenytype{2}{\genusfn{d}{7}}$-isogenies.
\end{proposition}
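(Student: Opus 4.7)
The plan is to apply Lemma~\ref{lemma:family-dimension-with-t} to count moduli and Lemma~\ref{lemma:isogeny-classification} to identify the isogeny type.

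First, I would check that $f_{7}$ satisfies hypotheses (i)--(iv) of Lemma~\ref{lemma:family-dimension-with-t}: $f_{7}$ is monic of degree~$7$, so $f_0 = 1$; its $x^6$-coefficient vanishes, so $f_1 = 0$; its $x^5$-coefficient $f_2 = -7\alpha_7 t$ is nonzero; and its $x^4$-coefficient equals $f_2$, so $f_3 = \kappa f_2$ with $\kappa = 1 \in \QQ \subseteq K$. Since $f_2$ lies in $K(t)\setminus K$, conclusion~(1) of the lemma gives that $\family{X}_{d,7}$ (and by the symmetric argument $\family{Y}_{d,7}$) has $d$ moduli; by Torelli's theorem the family $\phi_{d,7}$ then also has $d$ moduli.

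For the isogeny structure, I plan to apply Lemma~\ref{lemma:isogeny-classification} with $m = 2$. Reading $A_{7}$ as a polynomial in $x_1$ gives $c_0 = 1$, $c_1 = -\alpha_7^\sigma x_2$, $c_2 = \alpha_7 x_2^2 + (3-2\alpha_7^\sigma)t$, and $c_3 = -x_2^3 - (3-2\alpha_7)t\,x_2 + (\alpha_7-\alpha_7^\sigma)t$, exhibiting $A_{7}$ in the form of Eq.~\eqref{eq:A-form} with $r = 3$. Feeding these into Algorithm~\ref{algorithm:differential-matrix-block} with $n = 7$ produces the $6\times 6$ lower-triangular matrix $\differentialblock{A_{7}}{6}$; the analogous computation for $\tau(A_7)$ produces $\differentialblock{\tau(A_{7})}{6}$. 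The crucial verification is then the matrix identity
\[
	\differentialblock{A_{7}}{6}\,\differentialblock{\tau(A_{7})}{6} = 2\,I_6 .
\]
Granted this, Lemma~\ref{lemma:isogeny-classification} immediately yields that $\phi_{d,7}$ splits multiplication-by-$2$, and since $m = 2$ is squarefree, it is an $\isogenytype{2}{\genusfn{d}{7}}$-isogeny as claimed.

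The main obstacle is the matrix identity, which is a purely symbolic computation in the polynomial ring $\QQ(\alpha_7)[t][x_2]$. I would expect the relations $\alpha_7\alpha_7^\sigma = 2$ and $\alpha_7 + \alpha_7^\sigma = -1$ (coming from the minimal polynomial of $\alpha_7$) to drive the cancellation of the off-diagonal and $t$-dependent contributions in the product, leaving precisely $2\,I_6$. While the calculation is mechanical, the entries grow in complexity in $x_2$ and $t$, so in practice I would verify the identity using the matrix supplied in the accompanying \texttt{degree-7.m} file rather than attempting it by hand.
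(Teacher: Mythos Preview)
Your proposal is correct and follows essentially the same route as the paper: apply Lemma~\ref{lemma:family-dimension-with-t} for the moduli count, compute $\differentialblock{A_7}{6}$ via Algorithm~\ref{algorithm:differential-matrix-block}, and invoke Lemma~\ref{lemma:isogeny-classification} with $m=2$. The one shortcut the paper takes that you do not mention explicitly is to use the relation $\tau(A_7) = -A_7^\sigma$, which gives $\differentialblock{\tau(A_7)}{6} = \differentialblock{A_7}{6}^\sigma$ without a second pass through the algorithm and makes the identity $\differentialblock{A_7}{6}\differentialblock{A_7}{6}^\sigma = 2I_6$ transparent from the norm $\alpha_7\alpha_7^\sigma = 2$ on the diagonal.
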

\begin{proof}
	Both $\family{X}_{d,7}$ and $\family{Y}_{d,7}$
	have genus $\genusfn{d}{7}$,
	with $d$ moduli by
	Lemma~\ref{lemma:family-dimension-with-t}.
	Applying Algorithm~\ref{algorithm:differential-matrix-block} to $A_7$,
	we find
	that \( \differentialblock{A_{7}}{6} \) is equal to
	\[
		\left(\!\!\!\begin{array}{rrrrrr}
\alpha_{7} & 0 & 0 & 0 & 0 & 0 \\
0 & \alpha_{7} & 0 & 0 & 0 & 0 \\
-3(2\alpha_{7} + 1) t & 0 & \alpha_{7}^\sigma & 0 & 0 & 0 \\
-4(\alpha_{7} + 4) t & -4(\alpha_{7} + 4) t & 0 & \alpha_{7} & 0 & 0 \\
35(\alpha_{7} + 2) t^2 & -5(2\alpha_{7} + 1) t & -5(\alpha_{7} - 3) t & 0 & \alpha_{7}^\sigma & 0 \\
-42(\alpha_{7} - 3) t^2 & -21(2 \alpha_{7} - 3) t^2 & -6(\alpha_{7} - 3) t & -6(2 \alpha_{7} + 1) t & 0 & \alpha_{7}^\sigma
		\end{array}\!\right)
		.
	\] 
	We have
	\[
		\differentialblock{A_{7}}{6}
		\differentialblock{\tau(A_{7})}{6}
		=
		\differentialblock{A_{7}}{6}
		\differentialblock{-A_{7}^\sigma}{6}
		=
		\differentialblock{A_{7}}{6}
		\differentialblock{A_{7}}{6}^\sigma
		=
		2I_{6}
	\]
	(since $\tau(A_{7}) = -A_{7}^\sigma$),
	so $\phi_{d,7}$ is 
	a family of $\isogenytype{2}{\genusfn{d}{7}}$-isogenies
	by
	Lemma~\ref{lemma:isogeny-classification}.
\end{proof}

\begin{remark}
	We may view $\phi_{d,7}$
	as a deformation of an endomorphism of 
	the superelliptic Jacobian $\Jacfamily{\family{Z}_{d,7}}$
	of \S\ref{family:cyclotomic-CM}.
	Embed $\ZZ[\alpha_7]$ in~$\ZZ[\zeta_7]$,
	identifying~$\alpha_7$ with $\zeta_7 + \zeta_7^2 + \zeta_7^4$.
	At $t = 0$,
	both $\family{X}_{d,7}$ and $\family{Y}_{d,7}$
	specialize to $\family{Z}_{d,7}$,
	which
	has an automorphism $\zeta:(x,y)\mapsto(\zeta_7 x,y)$ of order $7$,
	while $\family{C}_{d,7}$
	specializes~to 
	\[
		\begin{array}{r@{\;=\;}l}
			C_0 
			&
			\variety{y_1-y_2,
				x_1^3 - x_1^2x_2 + \alpha_{7}^\sigma x_1x_2^2 - x_2^3
			}
			\\ &
			\sum_{i\in\{1,2,4\}}
			\variety{y_1-y_2,\zeta_7^ix_1 - x_2}
			\subset 
			\product[\QQ(\alpha_7)(s_2,\ldots,s_d)]{\family{Z}_{d,7}}{\family{Z}_{d,7}}
			.
		\end{array}
	\]
	Each 
	$\variety{y_1-y_2,\zeta_7^ix_1 - x_2}$ 
	induces $\zeta^i$ on~$\Jacfamily{\family{Z}_{d,7}}$,
	so 
	\(
		\phi_{C_0} 
		= 
		\zeta + \zeta^2 + \zeta^4 
		=
		\multiplication[\Jacfamily{\family{Z}_{d,7}}]{\alpha_{7}}
	\).
	Therefore,~$\phi_{d,7}$
	is a one-parameter deformation of 
	\( 
		\multiplication[\Jacfamily{\family{Z}_{d,7}}]{\alpha_{7}}
	\),
	which splits
	\(
		\multiplication[\Jacfamily{\family{Z}_{d,7}}]{2}
	\).
	(This gives
	an alternative proof of 
	Proposition~\ref{proposition:phi-d-7}.)
\end{remark}

\begin{remark}
	Given any hyperelliptic curve $X$ of genus~$3$
	and a maximal $2$-Weil isotropic subgroup $S$ of $\Jac{X}[2]$,
	there exists a (possibly reducible, and generally non-hyperelliptic) curve $Y$
	of genus~$3$ and
	an isogeny $\phi: \Jac{X} \to \Jac{Y}$
	with kernel~$S$,
	which may be defined 
	over a quadratic extension of $K(S)$.
	An algorithm to compute equations for $Y$ and $\phi$
	when $S$ is generated by differences of Weierstrass points
	appears in~\cite{Smith-genus-3}.
	Mestre~\cite{Mestre} gives a $4$-parameter family 
	of $\isogenytype{2}{3}$-isogenies
	of hyperelliptic Jacobians;
	their
	kernels are also generated by differences of
	Weierstrass points.
	Since~$\Jacfamily{\family{X}_{2,7}}[2]$
	is generated by differences of Weierstrass points,
	which correspond to roots of $f_{7}$ together with the point at infinity,
	we can factor $f_{7}$ (or a reduction at
	some well-chosen prime) over its splitting field,
	and then explicitly compute the restriction of~$\phi_{2,7}$
	to $\Jacfamily{\family{X}_{2,7}}[2]$
	to show that its kernel 
	is \emph{not} generated by differences of Weierstrass points.
	Therefore,~$\phi_{2,7}$ is not one of the isogenies
	of~\cite{Smith-genus-3}
	or~\cite{Mestre}.
\end{remark}

\section{
	Genus $\genusfn{d}{11}$ families 
	from Theorem~\ref{theorem:CNC-theorem} (2)
}
\label{family:degree-11}

Consider Theorem~\ref{theorem:CNC-theorem}(2):
Let~$\alpha_{11}$
be an element of $\QQbar$
satisfying
\[
	\alpha_{11}^2 + \alpha_{11} + 3 = 0 ,
\]
so $\QQ(\alpha_{11}) = \QQ(\sqrt{-11})$.
The involution $\sigma: \alpha_{11} \mapsto 3/\alpha_{11}$ 
generates $\Gal{\QQ(\alpha_{11})/\QQ}$.
Let
$f_{11}$ be the polynomial 
over~$\QQ(\alpha_{11})$
defined by
\[
	\begin{array}{r@{\;}l}
	f_{11}(x) := 
	&
	x^{11} + 11\alpha_{11} x^9 + 22 x^8 - 33(\alpha_{11} + 4) x^7 + 176 \alpha_{11} x^6 
	\\
	&
	{} - 33(7\alpha_{11} - 5) x^5 - 330(\alpha_{11} + 4) x^4 + 693 (\alpha_{11} + 1) x^3 
	\\
	&
	{} - 220(5\alpha_{11} - 1) x^2 - 33(8\alpha_{11} + 47) x + 198 \alpha_{11}
	\end{array}
\]
(so $f_{11} = 11g$,
where $g$ is 
the polynomial of~\cite[\S5.2]{Cassou-Nogues--Couveignes}
with $a_2 = \alpha_{11}^\sigma$).
We have a factorization
$f_{11}(x_1) - f_{11}^\sigma(x_2) = A_{11}(x_1,x_2)B_{11}(x_1,x_2)$,
where 
\[
	\begin{array}{r@{\;}l}
	A_{11}(x_1,x_2) 
	=
	&
	x_1^5 - \alpha_{11}x_1^4x_2 - x_1^3x_2^2 + (4 \alpha_{11} + 2)x_1^3 + x_1^2x_2^3 + (\alpha_{11} + 6)x_1^2x_2 
	\\ & {}
	- (2 \alpha_{11} - 10)x_1^2 - (\alpha_{11} + 1)x_1x_2^4 + (\alpha_{11} - 5)x_1x_2^2 
	\\ & {}
	- (12 \alpha_{11} + 6)x_1x_2 + (8 \alpha_{11} - 7)x_1 - x_2^5 + (4 \alpha_{11} + 2)x_2^3 
	\\ & {}
	- (2 \alpha_{11} + 12)x_2^2 + (8 \alpha_{11} + 15)x_2 + 12 \alpha_{11} + 6
	.
	\end{array}
\]
Both $A_{11}$ and $B_{11}$ are absolutely irreducible,
and~$\tau(A_{11}) = -A_{11}^\sigma$
and~$\tau(B_{11}) = B_{11}^\sigma$.

\begin{proposition}
\label{proposition:phi-d-11}
	Let $d > 1$ be an integer,
	and consider the families
	defined by
	\[
	\begin{array}{c}
		\family{X}_{d,11} : P_d(y_1) = f_{11}(x_1) 
		,\quad 
		\family{Y}_{d,11} : P_d(y_2) = f_{11}^\sigma(x_2) 
		,\medskip \\
		\family{C}_{d,11}
		=
		\variety{y_1-y_2,A_{11}(x_1,x_2)}
		\subset
		\product[\QQ(\alpha_{11})(s_2,\ldots,s_d)]{\family{X}_{d,11}}{\family{Y}_{d,11}}
		.
	\end{array}
	\]
	The induced homomorphism
	\(
		\phi_{d,11}
		=
		\phi_{\family{C}_{d,11}}: 
		\Jacfamily{\family{X}_{d,11}} 
		\to \Jacfamily{\family{Y}_{d,11}}
	\)
	is a $(d-1)$-dimensional family of $\isogenytype{3}{\genusfn{d}{11}}$-isogenies.
\end{proposition}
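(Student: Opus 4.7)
The proof will closely mirror that of Proposition~\ref{proposition:phi-d-7}, with the essential differences being the absence of a free parameter~$t$ (which reduces the dimension of the family by one) and the fact that the matrix identity will produce the value~$3$ instead of~$2$ on the diagonal. The genus assertion is immediate: both $\family{X}_{d,11}$ and $\family{Y}_{d,11}$ have Newton polygon $\Newton{d,11}$, so both have geometric genus $\#\Newtonint{d,11} = \genusfn{d}{11}$ by the discussion at the end of~\S\ref{section:construction}.

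For the moduli count, I would apply Lemma~\ref{lemma:family-dimension-no-t} with $K = \QQ(\alpha_{11})$ and $f = f_{11}$. Reading off the coefficients of $f_{11}$, we verify hypotheses (i)--(iv): $f_0 = 1$, $f_1 = 0$ (no $x^{10}$ term), $f_2 = 11\alpha_{11} \neq 0$, and $f_3 = 22 = (2/\alpha_{11}^\sigma) \cdot f_2$ with $2/\alpha_{11}^\sigma \in K$. Since $f_{11}$ has no coefficient in $K(t)\setminus K$, we are in Case~(2) of the lemma, and~$\family{X}_{d,11}$ has $d-1$ moduli. The same argument applies to $\family{Y}_{d,11}$.

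For the kernel structure, I would invoke Lemma~\ref{lemma:isogeny-classification}. The hypothesis requires me to compute $\differentialblock{A_{11}}{10}$ via Algorithm~\ref{algorithm:differential-matrix-block}, viewing $A_{11}$ as a polynomial of degree $r=5$ in $x_1$ over $\QQ(\alpha_{11})[x_2]$ with $c_i(x_2)$ of degree at most $i$, and then check that
\[
\differentialblock{A_{11}}{10}\,\differentialblock{\tau(A_{11})}{10} = 3\,I_{10}.
\]
Since $\tau(A_{11}) = -A_{11}^\sigma$, this simplifies to $\differentialblock{A_{11}}{10}\,\differentialblock{A_{11}}{10}^\sigma = 3\,I_{10}$. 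Granted this identity, Lemma~\ref{lemma:isogeny-classification} applies with the squarefree value $m = 3$, and yields that $\phi_{d,11}$ is a family of $\isogenytype{3}{\genusfn{d}{11}}$-isogenies of the stated dimension.

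The main obstacle is not conceptual but computational: unlike the degree-$7$ case where $\differentialblock{A_{7}}{6}$ fits legibly in the article, the matrix $\differentialblock{A_{11}}{10}$ is a $10\times 10$ lower-triangular matrix over $\QQ(\alpha_{11})$ whose entries become too large to display. The verification of $\differentialblock{A_{11}}{10}\differentialblock{A_{11}}{10}^\sigma = 3 I_{10}$ is therefore deferred to the accompanying file \texttt{degree-11.m}; the reader need only apply Algorithm~\ref{algorithm:differential-matrix-block} and then multiply to confirm the identity, so the only thing that must be trusted is the routine symbolic computation carried out there.
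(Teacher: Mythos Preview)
Your approach matches the paper's proof exactly: genus from the Newton polygon, $d-1$ moduli via Case~(2) of Lemma~\ref{lemma:family-dimension-no-t}, and the kernel structure from Lemma~\ref{lemma:isogeny-classification} via the identity $\differentialblock{A_{11}}{10}\differentialblock{A_{11}}{10}^\sigma = 3I_{10}$ (with the matrix deferred to \texttt{degree-11.m}). One harmless slip: your $\kappa = 2/\alpha_{11}^\sigma$ does not give $\kappa f_2 = 22$; the correct value is $\kappa = 2/\alpha_{11}$, though hypothesis~(iv) is automatic once $f_2 \neq 0$ since $f_3/f_2 \in K$.
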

\begin{proof}
	Both $\family{X}_{d,11}$ and $\family{Y}_{d,11}$
	have genus $\genusfn{d}{11}$,
	and $d-1$ moduli by Lemma~\ref{lemma:family-dimension-no-t}.
	As in Proposition~\ref{proposition:phi-d-7},
	we calculate
	$\differentialblock{A_{11}}{10}$
	(given in~\texttt{degree-11.m})
	using Algorithm~\ref{algorithm:differential-matrix-block};
	its diagonal entries
	are all either $\alpha_{11}$ or $\alpha_{11}^\sigma$.
	Using $\tau(A_{11}) = -A_{11}^\sigma$,
	we find
	\[
		\differentialblock{A_{11}}{10}
		\differentialblock{\tau(A_{11})}{10}
		=
		\differentialblock{A_{11}}{10}
		\differentialblock{A_{11}}{10}^\sigma
		=
		3I_{10}
		,
	\]
	so $\phi_{d,11}$ 
	is a family of $\isogenytype{3}{\genusfn{d}{11}}$-isogenies
	by Lemma~\ref{lemma:isogeny-classification}.
\end{proof}

\section{
	Genus $\genusfn{d}{13}$ families 
	from Theorem~\ref{theorem:CNC-theorem} (3)
}
\label{family:degree-13}

Consider Theorem~\ref{theorem:CNC-theorem}(3):
Let~$\beta_{13}$
and $\alpha_{13}$
be elements of $\QQbar$ satisfying
\[
	\beta_{13}^2 - 5\beta_{13} + 3 = 0 
	\text{\quad and\quad }
	\alpha_{13}^2 + (\beta_{13}-2)\alpha_{13} + \beta_{13} = 0 .
\]
The field
$\QQ(\alpha_{13}) = \QQ(\sqrt{-3\sqrt{13}+1})$ 
is an imaginary quadratic extension of the real quadratic field
$\QQ(\beta_{13}) = \QQ(\sqrt{13})$.
The involution
$\sigma: \alpha_{13} \mapsto \beta_{13}/\alpha_{13}$ 
generates $\Gal{\QQ(\alpha_{13})/\QQ(\beta_{13})}$.
Let $t$ be a free parameter, and~let
\[
	f_{13}(x) 
	=
	x^{13}
	+
	39((3 \beta_{13} - 13) \alpha_{13} - 2 \beta_{13} + 8) t x^{11}
	+ \cdots
\]
be the polynomial of degree $13$ over $\QQ(\alpha_{13})[t]$
defined in the file \texttt{degree-13.m}
(we have $f_{13} = 13g$,
where $g$ is the polynomial 
of~\cite[\S5.3]{Cassou-Nogues--Couveignes} with $a_1 = \alpha_{13}$ and $T = t$).
We have a factorization
\(
	f_{13}(x_1) - f_{13}^\sigma(x_2) 
	= 
	A_{13}(x_1,x_2)B_{13}(x_1,x_2)
\),
where
\[
	\begin{array}{r@{\;}l}
	A_{13}(x_1,x_2)
	= 
	&
	x_1^4  + x_2^4
	+ (\beta_{13} - 3) x_1^2 x_2^2
	- 9(3 \beta_{13} - 14) t x_1 x_2
	+ 12(47 \beta_{13} - 202) t^2
	\\ & {} 
	- ((\beta_{13} - 4) \alpha_{13} + 2) x_1^3 x_2
	+ ((\beta_{13} - 4) \alpha_{13} - \beta_{13} + 3) x_1 x_2^3
	\\ & {} 
	+ 3((17 \beta_{13} - 73) \alpha_{13} - 12 \beta_{13} + 50) t x_1^2
	\\ & {} 
	- 3((17 \beta_{13} - 73) \alpha_{13} - 10 \beta_{13} + 45) t x_2^2
	\\ & {} 
	+ 3((5 \beta_{13} - 22) \alpha_{13} - 9 \beta_{13} + 38) t x_1
	\\ & {} 
	- 3((5 \beta_{13} - 22) \alpha_{13} + 2 \beta_{13} - 9) t x_2
	.
	\end{array}
\]
Both $A_{13}$ and $B_{13}$ are absolutely irreducible,
and~$\tau(A_{13}) = A_{13}^\sigma$
and~$\tau(B_{13}) = -B_{13}^\sigma$.

\begin{proposition}
\label{proposition:phi-d-13}
	Let $d > 1$ be an integer,
	and consider the families
	defined by
	\[
	\begin{array}{c}
		\family{X}_{d,13} : P_d(y_1) = f_{13}(x_1) 
		,\quad  
		\family{Y}_{d,13} : P_d(y_2) = f_{13}^\sigma(x_2) 
		,\medskip \\
		\family{C}_{d,13} = \variety{y_1-y_2,A_{13}(x_1,x_2)}
		\subset
		\product[\QQ(\alpha_{13})(s_2,\ldots,s_d,t)]{\family{X}_{d,13}}{\family{Y}_{d,13}}
		.
	\end{array}
	\]
	The induced homomorphism
	\(
		\phi_{d,13}
		:= 
		\phi_{\family{C}_{d,13}}:
		\Jacfamily{\family{X}_{d,13}} 
		\to~\Jacfamily{\family{Y}_{d,13}}
	\) 
	is a $d$-dimensional family
	of $\isogenytype{3}{\genusfn{d}{13}}$-isogenies.
\end{proposition}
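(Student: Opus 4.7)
The plan is to follow the same template as the proofs of Propositions~\ref{proposition:phi-d-7} and~\ref{proposition:phi-d-11}, which gives a recipe tailored exactly to the data presented for $f_{13}$ and $A_{13}$. The three things to check are the genus of the fibres, the number of moduli, and the kernel structure via the representation on differentials.

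First I would note that by the general setup of Eq.~\eqref{eq:setup}, both $\family{X}_{d,13}$ and $\family{Y}_{d,13}$ are variable-separated curves cut out by $P_d(y) = (\text{deg }13)$, so the Newton polygon argument at the end of \S\ref{section:construction} gives $\genus{\family{X}_{d,13}} = \genus{\family{Y}_{d,13}} = \genusfn{d}{13}$. For the moduli count, I would invoke Lemma~\ref{lemma:family-dimension-with-t}: the polynomial $f_{13}$ is monic with vanishing $x^{12}$ coefficient (by construction of $P_d$ and the shape of $f_{13}$), its $x^{11}$ coefficient is a nonzero element of $\QQ(\alpha_{13})[t]\setminus\QQ(\alpha_{13})$, and the ratio of the $x^{10}$ and $x^{11}$ coefficients lies in the base field. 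Since at least one coefficient lies in $K(t)\setminus K$, case (1) of the lemma applies, giving $d$ moduli.

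Next I would carry out the kernel computation exactly as in Proposition~\ref{proposition:phi-d-11}. Apply Algorithm~\ref{algorithm:differential-matrix-block} to $A_{13}$ with $n = 13$ to obtain the $12\!\times\!12$ lower-triangular matrix $\differentialblock{A_{13}}{12}$; this is the object recorded in the file \texttt{degree-13.m}. Its diagonal entries will be $\alpha_{13}$ or $\alpha_{13}^\sigma$, which multiply to $\alpha_{13}\alpha_{13}^\sigma = \beta_{13}$\ldots actually what we need is that
\[
\differentialblock{A_{13}}{12}\,\differentialblock{\tau(A_{13})}{12} = 3\,I_{12}.
\]
Using the identity $\tau(A_{13}) = A_{13}^\sigma$ stated just before the proposition, this reduces to verifying
\[
\differentialblock{A_{13}}{12}\,\big(\differentialblock{A_{13}}{12}\big)^\sigma = 3\,I_{12},
\]
a purely mechanical computation. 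Once that identity is established, Lemma~\ref{lemma:isogeny-classification} applies directly, and since $3$ is squarefree it yields the conclusion that $\phi_{d,13}$ is an $\isogenytype{3}{\genusfn{d}{13}}$-isogeny. The family is $d$-dimensional because of the moduli count above together with the free parameter $t$ already absorbed into the count.

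The only real obstacle is the matrix identity $\differentialblock{A_{13}}{12}(\differentialblock{A_{13}}{12})^\sigma = 3I_{12}$: the entries of $\differentialblock{A_{13}}{12}$ are polynomials in $t$ over the quartic field $\QQ(\alpha_{13})$, and the $12\!\times\!12$ product involves cancellations among terms of reasonably high degree in $t$. This is not conceptually hard but is infeasible by hand; I would delegate it to the accompanying Magma file \texttt{degree-13.m}, exactly as done in the analogous step of Proposition~\ref{proposition:phi-d-11}. Everything else — the genus, the moduli dimension, and the passage from the splitting identity to the isogeny type — is immediate from the general machinery of \S\ref{sec:moduli} and \S\ref{section:representation}.
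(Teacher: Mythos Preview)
Your proposal follows the paper's proof essentially step for step: genus via the Newton polygon, $d$ moduli via Lemma~\ref{lemma:family-dimension-with-t} (case (1) because the $x^{11}$ coefficient of $f_{13}$ lies in $K(t)\setminus K$), computation of $\differentialblock{A_{13}}{12}$ via Algorithm~\ref{algorithm:differential-matrix-block}, the identity $\differentialblock{A_{13}}{12}\differentialblock{A_{13}}{12}^\sigma = 3I_{12}$ using $\tau(A_{13}) = A_{13}^\sigma$, and the conclusion via Lemma~\ref{lemma:isogeny-classification}.

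One small correction worth flagging: the diagonal entries of $\differentialblock{A_{13}}{12}$ are \emph{not} $\alpha_{13}$ and $\alpha_{13}^\sigma$ (whose product is $\beta_{13}$, not $3$ --- as you noticed mid-sentence). In the paper they turn out to be $\lambda_1 = (\beta_{13}-4)\alpha_{13}+2$, $\lambda_2 = \alpha_{13}+1$, and their $\sigma$-conjugates, each of norm $3$ over $\QQ(\beta_{13})$; this is what makes the diagonal of the product come out as $3$. You correctly identified that the essential content is the matrix identity itself, so this does not affect the validity of your argument.
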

\begin{proof}
	Both $\family{X}_{d,13}$ and $\family{Y}_{d,13}$
	have genus $\genusfn{d}{13}$,
	with $d$ moduli by Lemma~\ref{lemma:family-dimension-with-t}.
	We compute
	$\differentialblock{A_{13}}{12}$
	(given in~\texttt{degree-13.m})
	using Algorithm~\ref{algorithm:differential-matrix-block};
	its diagonal is
	\[
		( 
		\lambda_1, 
		\lambda_2, 
		\lambda_1, 
		\lambda_1^\sigma,  
		\lambda_2,  
		\lambda_2,  
		\lambda_2^\sigma,  
		\lambda_2^\sigma,  
		\lambda_1,  
		\lambda_1^\sigma,  
		\lambda_2^\sigma,  
		\lambda_1^\sigma 
		),
	\]
	where 
	$\lambda_1 = (\beta_{13} - 4)\alpha_{13} + 2$
	and $\lambda_2 = \alpha_{13} + 1$
	both have norm $3$
	in~$\QQ(\beta_{13})$.
	We find
	\[
		\differentialblock{A_{13}}{12}
		\differentialblock{\tau(A_{13})}{12}
		=
		\differentialblock{A_{13}}{12}
		\differentialblock{A_{13}}{12}^\sigma
		=
		3I_{12}
	\]
	(since $\tau(A_{13}) = A_{13}^\sigma$),
	so
	the result follows from 
	Lemma~\ref{lemma:isogeny-classification}.
\end{proof}

\begin{remark}
	As in \S\ref{family:degree-7},
	we may view $\phi_{d,13}$ as a deformation
	of an endomorphism of a superelliptic Jacobian.
	We embed~$\ZZ[\alpha_{13}]$ in $\ZZ[\zeta_{13}]$,
	identifying $\alpha_{13}$ with $1 + \zeta_{13}^3 + \zeta_{13}^9$;
	then $\lambda_1 = 1 + \zeta_{13}^7 + \zeta_{13}^8 + \zeta_{13}^{11}$.
	At $t = 0$, both $\family{X}_{d,13}$ and $\family{Y}_{d,13}$
	specialize to the family~$\family{Z}_{d,13}$
	of~\S\ref{family:cyclotomic-CM},
	while $\family{C}_{d,13}$
	specializes to 
	\[
		C_0 
		= 
		\sum_{i\in\{0,7,8,11\}}\!\!\!\!\!\!\variety{y_1-y_2, \zeta_{13}^{i}x_1-x_2}
		\subset
		\product[\QQ(\alpha_{13})(s_2,\ldots,s_d)]{\family{Z}_{d,13}}{\family{Z}_{d,13}} 
		.
	\]
	Each $\variety{y_1-y_2, \zeta_{13}^{i}x_1-x_2}$ 
	induces the automorphism 
	$\zeta^i:(x,y)\mapsto(\zeta_{13}^i x,y)$
	of~$\Jacfamily{\family{Z}_{d,13}}$,
	so 
	\[
		\phi_{C_0} = [1] + \zeta^7 + \zeta^8 + \zeta^{11}
		= 
		\multiplication[\Jacfamily{\family{Z}_{d,13}}]{\lambda_1};
	\]
	hence $\phi_{d,13}$
	is a one-parameter deformation 
	of~$\multiplication[\Jacfamily{\family{Z}_{d,13}}]{\lambda_1}$,
	which splits~$\multiplication[\Jacfamily{\family{Z}_{d,13}}]{3}$.
\end{remark}

\section{
	Genus $\genusfn{d}{15}$ families 
	from Theorem~\ref{theorem:CNC-theorem} (4)
}
\label{family:degree-15}

Consider Theorem~\ref{theorem:CNC-theorem}(4):
Let~$\alpha_{15}$ be an element of $\QQbar$ satisfying
\[
	\alpha_{15}^2 - \alpha_{15} + 4 = 0 ,
\]
so
$\QQ(\alpha_{15}) = \QQ(\sqrt{-15})$;
the involution $\sigma : \alpha_{15} \mapsto 4/\alpha_{15}$ 
generates $\Gal{\QQ(\alpha_{15})/\QQ}$.
Let
\[
	f_{15}(x) 
	=
	x^{15}
	+
	15(\alpha_{15} - 1) t x^{13}
	+
	15(\alpha_{15} + 7) t x^{12}
	+ 
	\cdots
\]
be the polynomial of degree $15$ over $\QQ(\alpha_{15})[t]$
defined in the file~\texttt{degree-15.m}
(so~$f_{15} = 15g$,
where $g$ is the polynomial
of~\cite[\S5.4]{Cassou-Nogues--Couveignes} with $a_1 = \alpha_{15}$
and $T = t$).
We have a factorization
\(
	f_{15}(x_1) - (-f_{15}^\sigma(x_2)) 
	= 
	A_{15}(x_1,x_2)B_{15}(x_1,x_2)
\),
where $A_{15}$ and $B_{15}$
are absolutely irreducible polynomials
of total degree $7$ and $8$ respectively
(also defined in~\texttt{degree-15.m}),
with
$\tau(A_{15}) = A_{15}^\sigma$
and 
$\tau(B_{15}) = B_{15}^\sigma$.

\begin{proposition}
\label{proposition:phi-d-15}
	Let $d > 1$ be an integer,
	and consider the families 
	defined by
	\[
	\begin{array}{c}
		\family{X}_{d,15} : P_d(y_1) = f_{15}(x_1) 
		,\quad 
		\family{Y}_{d,15} : P_d(y_2) = f_{15}^\sigma(x_2) 
		,\medskip \\
		\family{C}_{d,15}
		=
		\variety{y_1-y_2,A_{15}(x_1,x_2)}
		\subset
		\product[\QQ(\alpha_{15})(s_2,\ldots,s_d,t)]{\family{X}_{d,15}}{\family{Y}_{d,15}}
		.
	\end{array}
	\]
	The induced homomorphism
	\(
		\phi_{d,15}
		:=
		\phi_{\family{C}_{d,15}}: 
		\Jacfamily{\family{X}_{d,15}} 
		\to~\Jacfamily{\family{Y}_{d,15}}
	\)
	is a $d$-dimensional family
	of
	\(
		\isogenytypetwo{4}{\genusfn{d}{15}-\genusfn{d}{5}-\genusfn{d}{3}} {2}{2(\genusfn{d}{5}+\genusfn{d}{3})}
	\)-isogenies.
\end{proposition}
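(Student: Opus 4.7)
The plan is to follow the template of Propositions~\ref{proposition:phi-d-7}--\ref{proposition:phi-d-13}, but with an extra kernel-structure computation because $\phi_{d,15}$ splits multiplication by the non-squarefree integer~$4$. First, I would dispose of the moduli count: the coefficients of $f_{15}$ lie in $\QQ(\alpha_{15})(t)\setminus\QQ(\alpha_{15})$ (the parameter $t$ genuinely appears in the coefficients), so Lemma~\ref{lemma:family-dimension-with-t} gives exactly $d$ moduli for both $\family{X}_{d,15}$ and $\family{Y}_{d,15}$; their common genus $\genusfn{d}{15}$ is read directly off the Newton polygon $\Newton{d,15}$.

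Next I would compute $\differentialblock{A_{15}}{14}$ by applying Algorithm~\ref{algorithm:differential-matrix-block} to $A_{15}$, obtaining a lower-triangular $14\times 14$ matrix over $\ZZ[\alpha_{15}][t]$ recorded in \texttt{degree-15.m}. Using $\tau(A_{15}) = A_{15}^\sigma$, a direct (if bulky) symbolic multiplication should verify
\[
	\differentialblock{A_{15}}{14}\differentialblock{\tau(A_{15})}{14}
	=
	\differentialblock{A_{15}}{14}\differentialblock{A_{15}}{14}^\sigma
	=
	4\,I_{14}.
\]
The first assertion of Lemma~\ref{lemma:isogeny-classification} then yields $\dualof{\phi_{d,15}}\phi_{d,15}=\multiplication[\Jacfamily{\family{X}_{d,15}}]{4}$, so $\phi_{d,15}$ is an isogeny whose kernel is a maximal $4$-Weil isotropic subgroup of $\Jacfamily{\family{X}_{d,15}}[4]$, and in particular has order $4^{\genusfn{d}{15}}$.

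Because $4$ is not squarefree, the second assertion of Lemma~\ref{lemma:isogeny-classification} does not pin down the group structure of $\ker\phi_{d,15}$, so I would fall back on Algorithm~\ref{algorithm:Gak} with $e=[\QQ(\alpha_{15}):\QQ]=2$ to tabulate the $14$ Smith-form groups $G(A_{15},k)$ for $1\le k \le 14$ once and for all. Combining these via Eq.~\eqref{eq:kernel-relation} gives
\[
	(\ker\phi_{d,15})^{2}
	\cong
	\bigoplus_{i=1}^{\Bound{d,15}}
	G(A_{15},p_{d,15}(i)),
\]
and the isomorphism type of $\ker\phi_{d,15}$ is then recovered by uniqueness of elementary divisors.

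The main obstacle is this final matching step. I expect the tabulated $G(A_{15},k)$ to display a pattern in which most slices contribute only $\ZZ/4$ factors, while certain ``defective'' slices---those whose index $i$ is also the first coordinate of an interior point of $\Newton{d,5}$ or $\Newton{d,3}$---contribute a pair of $\ZZ/2$ factors in place of each expected $\ZZ/4$. The delicate point is to exhibit a clean combinatorial bijection between the defective contributions and $\Newtonint{d,5}\sqcup\Newtonint{d,3}$, yielding an exponent of exactly $2(\genusfn{d}{5}+\genusfn{d}{3})$ for the $\ZZ/2$ part of $\ker\phi_{d,15}$ and exactly $\genusfn{d}{15}-\genusfn{d}{5}-\genusfn{d}{3}$ for the $\ZZ/4$ part. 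This is where the factorization $15=3\cdot 5$ enters the proof, presumably reflecting subcovers of $\family{X}_{d,15}$ of degrees~$3$ and~$5$ whose Jacobians appear as isogeny factors responsible for the $2$-torsion contribution.
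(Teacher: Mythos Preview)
Your plan is the paper's proof: moduli via Lemma~\ref{lemma:family-dimension-with-t}, the identity $\differentialblock{A_{15}}{14}\differentialblock{A_{15}}{14}^\sigma = 4I_{14}$ via Algorithm~\ref{algorithm:differential-matrix-block} and Lemma~\ref{lemma:isogeny-classification}, then Algorithm~\ref{algorithm:Gak} with Eq.~\eqref{eq:kernel-relation} for the kernel structure. Two points to fix. First, Algorithm~\ref{algorithm:Gak} needs entries in $\integers{K}$, but $\differentialblock{A_{15}}{14}$ has entries in $\ZZ[\alpha_{15}][t]$; the paper specialises $t$ before running it. Second, and more importantly, your expected combinatorial pattern is off: it is not that certain \emph{slices} (indexed by $i$) are defective, but that within each slice certain \emph{positions} $j$ are. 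What Algorithm~\ref{algorithm:Gak} actually returns is
\[
	G(A_{15},k) \;\cong\; \isogenytypetwo{4}{2(k-m(k))}{2}{4m(k)},
	\qquad
	m(k) := \#\{\,1\le j\le k : \gcd(j,15)\neq 1\,\},
\]
so each lattice point $(i,j)\in\Newtonint{d,15}$ contributes $\isogenytype{4}{2}$ or $\isogenytype{2}{4}$ to $(\ker\phi_{d,15})^2$ according as $\gcd(j,15)=1$ or not. The clean bijection you are after is then on the \emph{second} coordinate: $(i,3j')\leftrightarrow(i,j')\in\Newtonint{d,5}$ and $(i,5j')\leftrightarrow(i,j')\in\Newtonint{d,3}$, giving exactly $\genusfn{d}{5}+\genusfn{d}{3}$ defective points and the asserted kernel. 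Your intuition about degree-$3$ and degree-$5$ subcovers is exactly right; the paper spells it out in Remark~\ref{remark:deformation-15}.
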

\begin{proof}
	Both $\family{X}_{d,15}$
	and $\family{Y}_{d,15}$
	have genus $\genusfn{d}{15}$,
	with $d$ moduli by Lemma~\ref{lemma:family-dimension-with-t}.
	We compute
	$\differentialblock{A_{15}}{14}$
	(given in~\texttt{degree-15.m})
	using Algorithm~\ref{algorithm:differential-matrix-block}.
	We find 
	\[
		\differentialblock{A_{15}}{14}
		\differentialblock{\tau(A_{15})}{14}
		=
		\differentialblock{A_{15}}{14}
		\differentialblock{A_{15}}{14}^\sigma
		=
		4I_{14}
	\]
	(using $\tau(A_{15}) = A_{15}^\sigma$),
	so $\phi_{d,15}$
	splits multiplication-by-$4$
	by Lemma~\ref{lemma:isogeny-classification}.
	After specializing $t$,
	Algorithm~\ref{algorithm:Gak}
	gives 
	$G(A_{15},k) \cong \isogenytypetwo{4}{2(k-m(k))}{2}{4m(k)}$,
	where $m(k) = \#\{i : 1\le i \le k, \gcd(i,15)\not= 1\}$,
	for each $1 \le k \le 14$.
	Each of the $\genusfn{d}{15}$ points $(i,j)$ in $\Newtonint{d,15}$
	therefore contributes a factor of 
	either $\isogenytype{4}{2}$ 
	or $\isogenytype{2}{4}$
	to $(\ker(\phi_{d,15}))^2$,
	according to whether $\gcd(j,15) = 1$ or not.
	The number of points $(i,j)$ in $\Newtonint{d,15}$
	with $\gcd(j,15)\not=1$ is equal to $\genusfn{d}{3} + \genusfn{d}{5}$,
	so
	\[
		(\ker\phi_{d,15})^2
		\cong
		\isogenytypetwo{4}{2(\genusfn{d}{15}-\genusfn{d}{5}-\genusfn{d}{3})}{2}{4(\genusfn{d}{5}+\genusfn{d}{3})} ;
	\]
	the result follows.
	(See Remark~\ref{remark:deformation-15} 
	for more detail on the kernel structure.)
\end{proof}

\begin{remark}
\label{remark:deformation-15}
	As in \S\ref{family:degree-7} and \S\ref{family:degree-13},
	we may view $\phi_{d,15}$ as a deformation
	of an endomorphism of a superelliptic Jacobian.
	Let $S = \{0,1,2,4,5,8,10\}$;
	we embed 
	$\ZZ[\alpha_{15}]$ in $\ZZ[\zeta_{15}]$,
	identifying 
	$\alpha_{15}$ 
	with 
	$\sum_{i\in S} \zeta_{15}^i$.
	At $t = 0$,
	the family $\family{X}_{d,15}$ specializes to
	$\family{Z}_{d,15} : P_d(y_1) = x_1^{15}$,
	which has an automorphism $\zeta: (x_1,y_1)\mapsto(\zeta_{15}x,y)$,
	while $\family{Y}_{d,15}$ specializes
	to $\family{Z}_{d,15}' : P_d(y_2) = -x_2^{15}$,
	which is isomorphic to $\family{Z}_{d,15}$ 
	via~$\iota: (x_2,y_2) \mapsto (-x_2,y_2)$.
	Meanwhile, $A$
	specializes to $A_0 = \prod_{i\in S}(\zeta_{15}^ix_1 + x_2)$,
	so $\family{C}_{d,15}$ 
	specializes to
	\[
		C_0 
		= 
		\sum_{i\in S}
		\variety{y_1-y_2,\zeta_{15}^ix_1 + x_2}
		\subset 
		\product[\QQ(\alpha_{15})(s_2,\ldots,s_d)]{\family{Z}_{d,15}}{\family{Z}_{d,15}'} 
		,
	\]
	and 
	\(
		\phi_{C_0} 
		= 
		\iota\sum_{i\in S}\zeta^i 
		=
		\iota\multiplication[\Jacfamily{\family{Z}_{d,15}}]{\alpha_{15}}
		.
	\)
	Hence
	$\phi_{d,15}$ is a one-parameter deformation
	of an isogeny isomorphic to the endomorphism
	$\multiplication[\Jacfamily{\family{Z}_{d,15}}]{\alpha_{15}}$.

	We gain further insight into the structure of $\ker \phi_{C_0}$,
	and hence~$\ker\phi_{C}$, 
	by decomposing~$\Jacfamily{\family{Z}_{d,15}}$.
	We may view
	$\Jacfamily{\family{Z}_{d,5}}$
	and 
	$\Jacfamily{\family{Z}_{d,3}}$
	as abelian subvarieties of $\Jacfamily{\family{Z}_{d,15}}$
	via the covers $\family{Z}_{d,15} \to \family{Z}_{d,5}$
	and $\family{Z}_{d,15} \to \family{Z}_{d,3}$,
	defined by
	$(x_i,y_i) \mapsto (x_i^3,y_i)$
	and 
	$(x_i,y_i) \mapsto (x_i^5,y_i)$,
	respectively.
	The endomorphism $\psi = {\iota}\circ{\phi_{C_0}}$
	of $\Jacfamily{\family{Z}_{d,15}}$
	is induced by $\variety{y_1-y_2,A_0(x_1,-x_2)}$.
	The matrix $\differentialblock{A_0(x_1,-x_2)}{14}$ is diagonal:
	\[ 
		\differentialblock{A_0(x_1,-x_2)}{14}
		=
		\mathrm{diag}(
		    \alpha_{15}^\sigma,
		    \alpha_{15}^\sigma,
		    2,
		    \alpha_{15}^\sigma,
		    -2,
		    2,
		    \alpha_{15},
		    \alpha_{15}^\sigma,
		    2,
		    -2,
		    \alpha_{15},
		    2,
		    \alpha_{15},
		    \alpha_{15}
		)
		.
	\]
	Considering Eq.~\eqref{eq:differential-matrix-decomposition},
	we see that
	\(
		\differentialmatrix{\psi}(\omega_{i,j})
		=
		2\omega_{i,j}
	\)
	whenever $j = 3$, $6$, $9$, and $12$ (that is,
	when $\omega_{i,j}$
	is the pullback of a differential on $\family{Z}_{d,5}$),
	so
	$\psi$
	acts as $\multiplication[\Jacfamily{\family{Z}_{d,15}}]{2}$
	on~$\Jacfamily{\family{Z}_{d,5}} \subset \Jacfamily{\family{Z}_{d,15}}$.
	Similarly,
	$\differentialmatrix{\phi_{C_0}}(\omega_{i,j}) = -2\omega_{i,j}$
	for $j = 5$ and $10$
	(when $\omega_{i,j}$ is the pullback
	of a differential on $\family{Z}_{d,3}$),
	so
	$\psi$ acts as $\multiplication[\Jacfamily{\family{Z}_{d,15}}]{-2}$
	on $\Jacfamily{\family{Z}_{d,3}} \subset \Jacfamily{\family{Z}_{d,15}}$.
	Looking at the other entries on the diagonal,
	we see that
	$\psi$ acts as multiplication-by-$\alpha_{15}$
	on the $(\genusfn{d}{15}-\genusfn{d}{5}-\genusfn{d}{3})$-dimensional
	complimentary subvariety $\family{A}$ of 
	$\product{\Jacfamily{\family{Z}_{d,3}}}{\Jacfamily{\family{Z}_{d,5}}}$
	in $\Jacfamily{\family{Z}_{d,15}}$.
	This gives us a clearer description of the isomorphism
	in the proof of Proposition~\ref{proposition:phi-d-15}:
	the factors~$\isogenytype{4}{\genusfn{d}{15}-\genusfn{d}{3}-\genusfn{d}{5}}$,
	$\isogenytype{2}{2\genusfn{d}{3}}$,
	and~$\isogenytype{2}{2\genusfn{d}{5}}$
	correspond to $\ker(\psi|_\family{A})$,
	$\ker(\phi|_{\Jacfamily{\family{Z}_{d,3}}})$,
	and~$\ker(\phi|_{{\Jacfamily{\family{Z}_{d,5}}}})$
	respectively.
\end{remark}

\section{
	Genus $\genusfn{d}{21}$ families 
	from Theorem~\ref{theorem:CNC-theorem} (5)
}
\label{family:degree-21}

Consider Theorem~\ref{theorem:CNC-theorem}(5):
Let~$\alpha_{21}$ be an element of $\QQbar$
satisfying
\[
	\alpha_{21}^2 - \alpha_{21} + 2 = 0 ,
\]
so
$\QQ(\alpha_{21}) = \QQ(\sqrt{-7})$;
the involution $\sigma: \alpha_{21} \mapsto 2/\alpha_{21}$ 
generates $\Gal{\QQ(\alpha_{21})/\QQ}$.
Let
\[
	f_{21}(x)
	=
	x^{21}
	+
	(42 \alpha_{21}+ 42) x^{19}
	+
	(84 \alpha_{21}+ 84) x^{18}
	+ 
	(2331 \alpha_{21} - 861) x^{17}
	+ \cdots
\]
be the polynomial of degree~$21$ over $\QQ(\alpha_{21})$
defined in the file~\texttt{degree-21.m}
(such that
$f_{21}(x) = 2^{21}g(x/2)$, 
where $g$ is the polynomial of~\cite[\S5.5]{Cassou-Nogues--Couveignes}
with $a_1 = \alpha_{21}$).
We have a factorization
$f_{21}(x_1) - f_{21}^\sigma(x_2) = A_{21}(x_1,x_2)B_{21}(x_1,x_2)$,
where 
\[
	\begin{array}{r@{\;}l}
	A_{21}(x_1,x_2) = 
	&
	x_1^5 + (\alpha_{21} + 1) x_1^4 x_2 + 2 \alpha_{21} x_1^3 x_2^2 + (10 \alpha_{21} + 18) x_1^3 
	\\ & {}
	+ (2 \alpha_{21} - 2) x_1^2 x_2^3 + (32 \alpha_{21} - 8) x_1^2 x_2 + (20 \alpha_{21} + 4) x_1^2 
	\\ & {}
	+ (\alpha_{21} - 2) x_1 x_2^4 + (32 \alpha_{21} - 24) x_1 x_2^2 + (32 \alpha_{21} - 16) x_1 x_2 
	\\ & {}
	+ (107 \alpha_{21} + 55) x_1 - x_2^5 + (10 \alpha_{21} - 28) x_2^3 + (20 \alpha_{21} - 24) x_2^2 
	\\ & {}
	+ (107 \alpha_{21} - 162) x_2 + 136 \alpha_{21} - 68
	.
	\end{array}
\]
Both $A_{21}$ and $B_{21}$ are absolutely irreducible,
and~$\tau(A_{21}) = -A_{21}^\sigma$
and~$\tau(B_{21}) = B_{21}^\sigma$.

\begin{proposition}
\label{proposition:phi-d-21}
	Let $d > 1$ be an integer,
	and consider the families 
	defined by
	\[
	\begin{array}{c}
		\family{X}_{d,21} : P_d(y_1) = f_{21}(x_1) 
		,\quad  
		\family{Y}_{d,21} : P_d(y_2) = f_{21}^\sigma(x_2) 
		, \medskip \\
		\family{C}_{d,21} = 
		\variety{y_1-y_2,A_{21}(x_1,x_2)}
		\subset 
		\product[\QQ(\alpha_{21})(s_2,\ldots,s_d)]{\family{X}_{d,21}}{\family{Y}_{d,21}}
		.
	\end{array}
	\]
	The induced homomorphism
	\(
		\phi_{d,21}
		:=
		\phi_{\family{C}_{d,21}}: 
		\Jacfamily{\family{X}_{d,21}} 
		\to~\Jacfamily{\family{Y}_{d,21}}
	\)
	is a $(d-1)$-dimensional family
	of
	\( 
		\isogenytypetwo{4}{\genusfn{d}{21}-\genusfn{d}{3}}{2}{2\genusfn{d}{3}} 
	\)-isogenies.
\end{proposition}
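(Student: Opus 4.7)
The plan is to follow the template of Propositions~\ref{proposition:phi-d-7}, \ref{proposition:phi-d-11}, and especially~\ref{proposition:phi-d-15}. First, both $\family{X}_{d,21}$ and $\family{Y}_{d,21}$ have genus $\genusfn{d}{21}$ by the Newton polygon count of~\S\ref{section:construction}. To count moduli, I would invoke Lemma~\ref{lemma:family-dimension-no-t}: the leading coefficients of $f_{21}$ satisfy $f_0 = 1$, $f_1 = 0$, $f_2 = 42(\alpha_{21}+1) \neq 0$, and $f_3 = 84(\alpha_{21}+1) = 2 f_2$, so $\kappa = 2 \in \QQ$. Since $f_{21}$ carries no free parameter $t$, we are in case (2), yielding $d-1$ moduli.

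Next, I would apply Algorithm~\ref{algorithm:differential-matrix-block} to compute the $20 \times 20$ block $\differentialblock{A_{21}}{20}$; its entries are too bulky to display and are recorded in \texttt{degree-21.m}. Using $\tau(A_{21}) = -A_{21}^\sigma$ together with the fact that Algorithm~\ref{algorithm:differential-matrix-block} is insensitive to an overall sign of $A$ (every ratio $c_j/c_0$ in the Newton--Girard recursion is preserved), the required identity reduces to
\[
	\differentialblock{A_{21}}{20}
	\cdot
	\differentialblock{A_{21}}{20}^\sigma
	=
	4 I_{20},
\]
a matrix identity over $\QQ(\alpha_{21})$ that can be verified directly. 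Lemma~\ref{lemma:isogeny-classification} then shows that $\phi_{d,21}$ splits multiplication-by-$4$ on $\Jacfamily{\family{X}_{d,21}}$.

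Since $4$ is not squarefree, the kernel structure is not pinned down by Lemma~\ref{lemma:isogeny-classification} alone, and I would invoke Algorithm~\ref{algorithm:Gak} to compute $G(A_{21}, k)$ for $1 \le k \le 20$. Guided by the degree-$15$ analogue, I expect a pattern
\[
	G(A_{21}, k)
	\cong
	\isogenytypetwo{4}{2(k - \floorof{k/7})}{2}{4 \floorof{k/7}} ,
\]
so that via Eq.~\eqref{eq:kernel-relation} each lattice point $(i,j) \in \Newtonint{d,21}$ contributes $\isogenytype{4}{2}$ or $\isogenytype{2}{4}$ to $(\ker \phi_{d,21})^2$ according as $7 \nmid j$ or $7 \mid j$. (The appearance of only the prime $7$, in contrast to both $3$ and $5$ in Proposition~\ref{proposition:phi-d-15}, reflects the much smaller total degree of $A_{21}$.) The map $(i,j) \mapsto (i, j/7)$ is a bijection between $\{(i,j) \in \Newtonint{d,21} : 7 \mid j\}$ and $\Newtonint{d,3}$, so there are exactly $\genusfn{d}{3}$ such points; these are precisely the indices of basis differentials pulled back along the degree-$7$ cover $\family{Z}_{d,21} \to \family{Z}_{d,3}$ defined by $(x,y) \mapsto (x^7,y)$. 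Summing and extracting a canonical square root (legitimate because $\isogenytypetwo{4}{2a}{2}{4b}$ is the square of $\isogenytypetwo{4}{a}{2}{2b}$) yields the claimed kernel type $\isogenytypetwo{4}{\genusfn{d}{21}-\genusfn{d}{3}}{2}{2\genusfn{d}{3}}$.

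The main obstacle is pinning down the precise pattern for $G(A_{21}, k)$: unlike the squarefree case, no single trace identity pins down the elementary divisor structure, and one must apply Algorithm~\ref{algorithm:Gak} to a $4k \times 4k$ integer matrix (with $e = [\QQ(\alpha_{21}) : \QQ] = 2$) for each of the twenty values of $k$, confirming that the ``anomalous'' factor appears exactly at positions $k$ where $\floorof{k/7}$ jumps. Because $A_{21}$ has coefficients in $\integers{\QQ(\alpha_{21})} = \ZZ[\alpha_{21}]$, no specialization of $s_2, \ldots, s_d$ is required, which keeps the computation tractable.
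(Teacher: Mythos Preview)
Your proposal is correct and follows essentially the same route as the paper's proof: invoke Lemma~\ref{lemma:family-dimension-no-t} for the moduli count, use Algorithm~\ref{algorithm:differential-matrix-block} and $\tau(A_{21}) = -A_{21}^\sigma$ to verify $\differentialblock{A_{21}}{20}\differentialblock{A_{21}}{20}^\sigma = 4I_{20}$, then run Algorithm~\ref{algorithm:Gak} and count the $7\mid j$ points via the bijection with $\Newtonint{d,3}$. Your exponents $G(A_{21},k)\cong\isogenytypetwo{4}{2(k-\lfloor k/7\rfloor)}{2}{4\lfloor k/7\rfloor}$ follow the degree-$15$ convention and are the ones consistent with $e=2$ in Eq.~\eqref{eq:kernel-relation}; the paper's printed exponents drop this factor of~$2$, but its subsequent ``each point contributes $\isogenytype{4}{2}$ or $\isogenytype{2}{4}$'' deduction agrees with yours.
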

\begin{proof}
	Both
	\(\family{X}_{d,21}\) and \( \family{Y}_{d,21} \)
	have genus $\genusfn{d}{21}$,
	with $d-1$ moduli by Lemma~\ref{lemma:family-dimension-no-t}.
	We compute
	$\differentialblock{A_{21}}{20}$
	(given in~\texttt{degree-21.m})
	using Algorithm~\ref{algorithm:differential-matrix-block}.
	We find that
	\[
		\differentialblock{A_{21}}{20}
		\differentialblock{\tau(A_{21})}{20}
		=
		\differentialblock{A_{21}}{20}
		\differentialblock{A_{21}}{20}^\sigma
		=
		4I_{20}
	\]
	(since $\tau(A_{21}) = -A_{21}^\sigma$),
	so $\phi_{21}$ splits multiplication-by-$4$
	by Lemma~\ref{lemma:isogeny-classification}.
	Applying Algorithm~\ref{algorithm:Gak},
	we see that
	\(
		G(A_{21},k) 
		\cong 
		\isogenytypetwo{4}{k-\lfloor{k/7}\rfloor}{2}{2\lfloor{k/7}\rfloor}
	\)
	for 
	$1 \le k \le 20$.
	Hence each point $(i,j)$ in $\Newtonint{d,21}$
	contributes a factor of 
	either $\isogenytype{4}{2}$ or $\isogenytype{2}{4}$ 
	to $(\ker(\phi_{d,21}))^2$,
	according to whether $7$ divides $j$ or not.
	Therefore
	\[
		(\ker\phi_{d,21})^2
		\cong
		\isogenytypetwo{4}{2(\genusfn{d}{21}-\genusfn{d}{3})}{2}{4\genusfn{d}{3}}
		,
	\]
	and the result follows.
\end{proof}

\section{
	Genus $\genusfn{d}{31}$ families from Theorem~\ref{theorem:CNC-theorem} (6)
}
\label{family:degree-31}

Consider Theorem~\ref{theorem:CNC-theorem}(6):
Let~$\alpha_{31}$ and $\beta_{31}$ be elements of $\QQbar$
satisfying
\[
	\beta_{31}^3 - 13\beta_{31}^2 + 46\beta_{31} - 32 = 0 
	\text{\quad and\quad }
	\alpha_{31}^2 - 1/2(\beta_{31}^2 - 7\beta_{31} + 4)\alpha_{31} + \beta_{31} = 0 
	.
\]
Note that 
$\QQ(\alpha_{31})$ is a sextic CM field,
and $\QQ(\beta_{31})$ is its totally real cubic subfield.
The involution $\sigma: \alpha_{31} \mapsto \beta_{31}/\alpha_{31}$
generates $\Gal{\QQ(\alpha_{31}/\QQ(\beta_{31})}$.
Let
\[
	\begin{array}{r@{\;}l}
	f_{31}(x) = 
	&
	x^{31} 
	- 31(\frac{1}{4} (\beta_{31}^2 - 5 \beta_{31} - 10) \alpha_{31} - (\beta_{31}^2 - 7 \beta_{31} + 12)) x^{29} 
	\\ & {}
	- 31(\frac{1}{2} (\beta_{31}^2 - 5 \beta_{31} - 10) \alpha_{31} - (2 \beta_{31}^2 - 14 \beta_{31} + 24)) x^{28} 
	+ \cdots
	\end{array}
\]
be the polynomial of degree~$31$ over~$\QQ(\alpha_{31})$
defined in the file~\texttt{degree-31.m}
(such that $f_{31}(x) = 2^{31}g(x/2)$,
where $g$ is the polynomial of~\cite[\S5.6]{Cassou-Nogues--Couveignes}
with $a_1 = \alpha_{31}$).
We have a factorization
$f_{31}(x_1) - f_{31}^\sigma(x_2) = A_{31}(x_1,x_2)B_{31}(x_1,x_2)$,
where
$A_{31}$ and $B_{31}$ are absolutely irreducible polynomials
of total degree $15$ and $16$, respectively,
with~$\tau(A_{31}) = -A_{31}^\sigma$
and~$\tau(B_{31}) = B_{31}^\sigma$.

\begin{proposition}
\label{proposition:phi-d-31}
	Let $d > 1$ be an integer,
	and consider 
	the families 
	defined by
	\[
	\begin{array}{c}
		\family{X}_{d,31} : P_d(y_1) = f_{31}(x_1) 
		,\quad  
		\family{Y}_{d,31} : P_d(y_2) = f_{31}^\sigma(x_2)
		, \medskip \\
		\family{C}_{d,31} = \variety{y_1-y_2,A_{31}(x_1,x_2)}
		\subset \product[\QQ(\alpha_{31})(s_2,\ldots,s_d)]{\family{X}_{d,31}}{\family{Y}_{d,31}}
		.
	\end{array}
	\]
	The induced homomorphism
	\(
		\phi_{d,31} 
		:= 
		\phi_{\family{C}_{d,31}}
		:
		\Jacfamily{\family{X}_{d,31}} 
		\to~\Jacfamily{\family{Y}_{d,31}}
	\)
	is a $(d-1)$-dimensional family of 
	\(\isogenytypethree{8}{\genusfn{d}{31}/3}{4}{2\genusfn{d}{31}/3}{2}{2\genusfn{d}{31}/3}\)-isogenies.
\end{proposition}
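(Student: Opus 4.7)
The plan is to follow the template of the proofs of Propositions~\ref{proposition:phi-d-7} through~\ref{proposition:phi-d-21}, adapted for the new wrinkle that the kernel is a $2$-group of exponent~$8$ rather than a group killed by a squarefree integer. First I would settle the numerical invariants: both $\family{X}_{d,31}$ and $\family{Y}_{d,31}$ are in the shape of Eq.~\eqref{eq:setup} with $n = 31$, so each has genus $\genusfn{d}{31}$ by the Newton-polygon computation of~\S\ref{section:construction}. Since $f_{31}$ is defined over~$\QQ(\alpha_{31})$ without any auxiliary parameter $t$ and satisfies hypotheses~(i)--(iv) of Lemma~\ref{lemma:family-dimension-no-t}, case~(2) of that lemma gives exactly $d - 1$ moduli.

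Next I would run Algorithm~\ref{algorithm:differential-matrix-block} on $A_{31}$ with $n = 31$ to produce the $30\times 30$ lower-triangular matrix $\differentialblock{A_{31}}{30}$ over~$\QQ(\alpha_{31})$, whose entries are recorded in~\texttt{degree-31.m}. Using the given relation $\tau(A_{31}) = -A_{31}^\sigma$, a direct calculation should verify
\[
	\differentialblock{A_{31}}{30}\differentialblock{\tau(A_{31})}{30}
	=
	\differentialblock{A_{31}}{30}\differentialblock{A_{31}}{30}^\sigma
	=
	8\, I_{30},
\]
so that Lemma~\ref{lemma:isogeny-classification} yields $\dualof{\phi_{d,31}}\phi_{d,31} = [8]_{\Jacfamily{\family{X}_{d,31}}}$; that is, $\phi_{d,31}$ is a family of isogenies splitting multiplication-by-$8$, with kernel inside $\Jacfamily{\family{X}_{d,31}}[8]$.

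Because $8$ is not squarefree, Lemma~\ref{lemma:isogeny-classification} alone does not pin down the isomorphism type of $\ker\phi_{d,31}$, so I would turn to Algorithm~\ref{algorithm:Gak} together with Eq.~\eqref{eq:kernel-relation}. Fixing a $\ZZ$-basis of the sextic ring $\integers{\QQ(\alpha_{31})}$, I would tabulate the groups $G(A_{31},k)$ for $1 \le k \le 30$ by computing Hermite Normal Forms of $12k\times 12k$ integer matrices. The expected pattern, by analogy with the proofs of Propositions~\ref{proposition:phi-d-15} and~\ref{proposition:phi-d-21}, is a uniform contribution of one copy of $\isogenytypethree{8}{}{4}{2}{2}{2}$ per three consecutive indices, so that assembling the slices gives
\[
	(\ker\phi_{d,31})^6
	\cong
	\bigoplus_{i=1}^{\Bound{d,31}} G\big(A_{31},\,p_{d,31}(i)\big)
	\cong
	\big(\isogenytypethree{8}{}{4}{2}{2}{2}\big)^{6\genusfn{d}{31}/3};
\]
the stated kernel structure then follows by extracting a sixth root, after noting that $3$ always divides $\genusfn{d}{31}$, since $\gcd(d,31) \in \{1,31\}$ forces $\genusfn{d}{31} = 15(d-1)$ or $15(31k - 2)$.

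The main obstacle is computational rather than conceptual. The matrix $\differentialblock{A_{31}}{30}$ has $436$ nonzero entries over a degree-$6$ number field, and Algorithm~\ref{algorithm:Gak} at $k = 30$ demands the Hermite Normal Form of a $360\times 360$ integer matrix; both verifications therefore lean heavily on the data in~\texttt{degree-31.m}. A more delicate point is to confirm that the computed invariants $G(A_{31},k)$ are genuinely independent of the specialization of $s_2,\ldots,s_d$ (legitimate by the remark following Algorithm~\ref{algorithm:Gak}, since $A_{31} \in \integers{\QQ(\alpha_{31})}[x_1,x_2]$), and that the slice-by-slice distribution of $\ZZ/8$-, $\ZZ/4$-, and $\ZZ/2$-factors across $\Newtonint{d,31}$ reassembles into precisely the claimed group, mirroring the accounting already carried out for $\phi_{d,15}$ and $\phi_{d,21}$.
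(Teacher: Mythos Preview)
Your proposal is correct and follows the paper's own proof essentially step for step: genus and moduli via Lemma~\ref{lemma:family-dimension-no-t}, the product $\differentialblock{A_{31}}{30}\differentialblock{A_{31}}{30}^\sigma = 8I_{30}$ via Algorithm~\ref{algorithm:differential-matrix-block} and Lemma~\ref{lemma:isogeny-classification}, and the kernel structure via Algorithm~\ref{algorithm:Gak} and Eq.~\eqref{eq:kernel-relation}.

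One small correction to your anticipated output of Algorithm~\ref{algorithm:Gak}: the actual result, as recorded in the paper, is the cleaner uniform statement
\[
	G(A_{31},k) \cong \big(\isogenytypethree{8}{}{4}{2}{2}{2}\big)^{2k}
	\quad\text{for every } 1 \le k \le 30,
\]
not a mod-$3$ pattern across consecutive indices. Unlike the $n = 15$ and $n = 21$ cases, where the contribution of an index $j$ depended on $\gcd(j,n)$, here $31$ is prime and every point of $\Newtonint{d,31}$ contributes the same factor $(\isogenytypethree{8}{}{4}{2}{2}{2})^{2}$ to $(\ker\phi_{d,31})^6$. Summing over all $\genusfn{d}{31}$ interior points gives $(\ker\phi_{d,31})^6 \cong (\isogenytypethree{8}{}{4}{2}{2}{2})^{2\genusfn{d}{31}}$, from which the stated kernel follows directly (and your observation that $3 \mid \genusfn{d}{31}$ confirms the exponent is integral). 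Your final formula is right; only the heuristic ``one copy per three consecutive indices'' should be replaced by ``two copies per index, into a sixth power''.
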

\begin{proof}
	Both
	\(\family{X}_{d,31} \)
	and
	\( \family{Y}_{d,31} \)
	have genus $\genusfn{d}{31}$, with
	$d-1$ moduli by Lemma~\ref{lemma:family-dimension-no-t}.
	We compute
	$\differentialblock{A_{31}}{30}$
	(given in~\texttt{degree-31.m})
	using Algorithm~\ref{algorithm:differential-matrix-block}.
	We see that
	\[
		\differentialblock{A_{31}}{30}
		\differentialblock{\tau(A_{31})}{30}
		=
		\differentialblock{A_{31}}{30}
		\differentialblock{A_{31}}{30}^\sigma
		=
		8I_{30}
	\]
	(using
	$\tau(A_{31}) = -A_{31}^\sigma$),
	so $\phi_{d,31}$ splits multiplication-by-$8$
	by Lemma~\ref{lemma:isogeny-classification}.
	Algorithm~\ref{algorithm:Gak}
	gives
	$G(A_{31},k) \cong (\isogenytypethree{8}{}{4}{2}{2}{2})^{2k}$
	for $1 \le k \le 30$,
	so
	\[
		(\ker(\phi_{d,31}))^6
		\cong
		\left(\isogenytypethree{8}{}{4}{2}{2}{2}\right)^{2\genusfn{d}{31}}
		;
	\]
	the result follows.
\end{proof}

\section{
	Absolute simplicity
}
\label{section:simplicity}

We want to verify
that our isogenies
$\phi: \Jacfamily{\family{X}} \to \Jacfamily{\family{Y}}$
do not arise from products of isogenies of lower-dimensional abelian varieties.
To this end,
where possible,
we show that the generic fibres of~$\Jacfamily{\family{X}}$
and~$\Jacfamily{\family{Y}}$
are absolutely simple.

\begin{proposition}
\label{proposition:simplicity}
	The generic fibres of $\Jacfamily{\family{X}_{d,n}}$
	and $\Jacfamily{\family{Y}_{d,n}}$
	are absolutely simple for
	\begin{enumerate}
	\item	$n = 7$ 
		and all $d \ge 2$; 
	\item	$n = 11$
		and all prime $d \not= 11$;
	\item	$n = 13$
		and all $d \ge 2$ 
	\item	$n = 15$ 
		and all prime $d \notin \{3,5,7\}$;
	\item	$n = 21$
		and all prime $d \notin \{3,5,7\}$;
	\item	$n = 31$
		and all prime $d \notin \{3,5,31\}$.
	\end{enumerate}
\end{proposition}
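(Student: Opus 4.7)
The plan is to exploit two standard facts: first, that the endomorphism algebra $\End^0(\Jacfamily{\family{X}})$ is upper semi-continuous under specialization, so it suffices to exhibit a single fibre of each family whose Jacobian is absolutely simple; and second, that for an abelian variety $A$ over a number field with good reduction at a prime $\mathfrak{p}$, absolute simplicity of the reduction $A_{\mathfrak{p}}$ over $\FF_{\mathfrak{p}}$ implies absolute simplicity of $A$ over $\QQbar$. Combining these, for each case $(n,d)$ in the statement, it is enough to produce a single specialization of $s_2,\ldots,s_d$ (and $t$, where relevant) to values in a number field $K$, together with a prime $\mathfrak{p}$ of good reduction, such that the reduced Jacobian is absolutely simple over $\FF_{\mathfrak{p}}$.

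To verify absolute simplicity of a reduced Jacobian $J/\FF_q$, I would compute the characteristic polynomial $\chi(T)$ of Frobenius by point-counting (the Gaudry--G\"urel algorithm credited in the acknowledgements is ideal for the superelliptic specializations $t = 0$), then apply a standard criterion: $J$ is absolutely simple if and only if, for every positive integer $k$, the polynomial $\chi^{(k)}(T)$ whose roots are the $k^{\mathrm{th}}$ powers of the roots of $\chi$ remains a power of an irreducible polynomial. Since the roots of $\chi$ are Weil numbers of absolute value $\sqrt{q}$, only finitely many $k$ need to be checked (by a theorem of Zarhin, or equivalently by explicitly bounding the cyclotomic ramification of the splitting field of~$\chi$). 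The CM by $\ZZ[\alpha_n]$ on the generic fibre already forces $\chi$ to factor over $\QQ(\alpha_n)$, so the effective task is to verify that the ``half-characteristic polynomial'' of degree $2\genusfn{d}{n}/[\QQ(\alpha_n):\QQ]$ over $\QQ(\alpha_n)$ is irreducible.

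To handle the infinite families of $d$ (all $d \ge 2$ for $n = 7, 13$; primes outside a fixed exclusion set in the other cases), the proof would not attempt individual verifications but instead proceed structurally: the restrictions on $d$ arise precisely because when $m \mid \gcd(d, n)$ for some $m > 1$ in the excluded set, the cover $(x,y) \mapsto (x^m, y)$ from $\family{X}_{d,n}$ to $\family{X}_{d,n/m}$ forces a nontrivial decomposition of~$\Jacfamily{\family{X}_{d,n}}$ (cf.~Remark~\ref{remark:indecomposability} and Remark~\ref{remark:deformation-15}). For the allowed $d$, any decomposition of the generic Jacobian would have to be compatible with the $\ZZ[\alpha_n]$-action, which restricts the possible isogeny factors to abelian varieties whose dimensions are multiples of $[\QQ(\alpha_n):\QQ]/2$; a direct dimension-count using the formula for $\genusfn{d}{n}$ shows that for $d$ prime and avoiding the excluded primes, no compatible splitting is numerically possible after verifying absolute simplicity in a handful of small base cases (e.g.\ $d = 2$ and a few small primes) by the point-counting method above.

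The main obstacle will be two-fold. First, the point-counting computations for the exceptional curves $f_{15}$, $f_{21}$, and $f_{31}$ are already delicate because of the size of the coefficient fields and the high genus; choosing the specialization and prime carefully to keep the computation tractable is essential. Second, and more conceptually, the step of ruling out decompositions for \emph{all} large $d$ simultaneously requires leveraging the CM structure in a uniform way, and verifying that no unexpected CM-compatible decomposition exists beyond those coming from the obvious quotient covers $\family{X}_{d,n} \to \family{X}_{d,n/m}$. This comes down to a careful examination of the decomposition of $\ZZ[\zeta_n]$-modules of the appropriate rank, combined with a base-case computation establishing that the specific CM type realized by $\Jacfamily{\family{X}_{d,n}}$ is primitive for at least one small value of $d$.
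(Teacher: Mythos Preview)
Your reduction principle (specialize, then possibly reduce mod $\mathfrak{p}$, then check the Weil polynomial via Howe--Zhu) is exactly what the paper uses for a handful of sporadic cases such as $(d,n)=(2,11)$, $(2,15)$, $(5,11)$. But this mechanism only certifies finitely many $(d,n)$ at a time, and your attempt to push it to the infinite families of $d$ has a genuine gap.

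The central error is the claim that the generic fibre carries ``CM by $\ZZ[\alpha_n]$''. It does not. The correspondence $\family{C}_{d,n}$ induces an isogeny between \emph{different} Jacobians $\Jacfamily{\family{X}_{d,n}}$ and $\Jacfamily{\family{Y}_{d,n}}$; the only endomorphism of $\Jacfamily{\family{X}_{d,n}}$ it produces is $\dualof{\phi}\phi=[m]$, which lies in $\ZZ$. The action of $\ZZ[\zeta_n]$ exists only on the superelliptic degeneration $\family{Z}_{d,n}$ (at $t=0$ for $n=7,13,15$), not on the generic fibre; and for $n=15$ that very degeneration is reducible, so it cannot witness simplicity. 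Consequently your ``CM-compatible dimension count'' has no structure to hook into, and the argument collapses. Your diagnosis of the excluded primes is also off: for $n=15$ you exclude $d=7$, yet $7\nmid 15$, so covers $(x,y)\mapsto(x^m,y)$ with $m\mid\gcd(d,n)$ cannot explain it.

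What the paper actually does for the infinite ranges of $d$ is invoke Zarhin's theorems on endomorphism rings of Jacobians of cyclic covers $y^d=f(x)$ (references~\cite{Zarhin--erJccpl,Zarhin--sJ,Zarhin--esJ,Zarhin--hJwCMdtpgpr}). These give absolute simplicity of a suitable specialization uniformly in $d$, once one checks a Galois-theoretic hypothesis on $f_n$; the excluded primes are exactly the $d$ dividing $n(n-1)$ left over from Zarhin's hypotheses (minus those small cases the paper then rescues by point counting). Without an analogue of Zarhin's input, a purely computational or CM-based argument will not cover infinitely many $d$.
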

\begin{proof}
	We need only prove absolute simplicity 
	for each $\Jacfamily{\family{X}_{d,n}}$
	(the existence of the isogeny $\phi_{d,n}$ then
	implies that $\Jacfamily{\family{Y}_{d,n}}$
	is absolutely simple).
	If $\Jacfamily{\family{X}_{d,n}}$ is reducible,
	then so are all of its specializations;
	so it suffices to
	exhibit an absolutely simple specialization 
	of $\Jacfamily{\family{X}_{d,n}}$.
	We can do this for many $(d,n)$
	by applying results of Zarhin 
	to hyperelliptic or superelliptic specializations.
	For~$n = 7$ and~$13$,
	we specialize at~$t = 0$;
	then
	we apply
	\cite[Theorem 1.1]{Zarhin--erJccpl}
	for~$d \ge 5$,
	and 
	\cite[Theorem 1.2]{Zarhin--sJ}
	for~$d = 3$ and~$4$.
	(We cannot use this approach for~$n = 15$,
	because the specialization 
	at~$t = 0$
	is always reducible: cf.~Remark~\ref{remark:deformation-15}.)
	For $n = 11, 15, 21$, and~$31$
	and all prime $d$ not dividing $n(n-1)$
	we specialize at $(s_2,\ldots,s_d) = (0,\ldots,0)$
	and apply \cite[Corollary~1.8]{Zarhin--esJ}.
	For 
	$(d,n) = (2,7)$, $(2,21)$, and~$(2,31)$,
	we specialize at $s_2 = 0$ and apply
	\cite[Theorem 2.3]{Zarhin--hJwCMdtpgpr}.
	For some of the remaining cases,
	we can use the fact that $\family{X}_{d,n}$
	is defined over a number field;
	by~\cite[Lemma~6]{Chai--Oort},
	it suffices to exhibit
	an absolutely simple reduction
	of a specialization of~$\Jacfamily{\family{X}_{d,n}}$
	modulo a prime of good reduction.
	We prove absolute simplicity of reductions
	by computing Weil polynomials
	(using 
	Gaudry and G\"urel's algorithm~\cite{Gaudry--Gurel}
	for superelliptic curves,
	and the Magma system's implementation~\cite{Harrison}
	of Kedlaya's algorithm~\cite{Kedlaya}
	for hyperelliptic curves)
	and applying~\cite[Proposition~3]{Howe--Zhu}.
	For~$(d,n) = (2,11)$
	we specialize at $s_2 = 0$
	and reduce at a prime over~$7$;
	for~$(d,n) = (2,13)$
	we specialize at $(s_2,t) = (1,0)$
	and reduce at a prime over~$53$;
	for~$(d,n) = (2,15)$
	we specialize at $(s_2,t) = (0,1)$
	and reduce at a prime over~$17$;
	and for~$(d,n) = (5,11)$
	we specialize 
	at $(s_2,\ldots,s_5) = (0,\ldots,0)$
	and reduce at a prime over~$31$.
\end{proof}

The list of values of $n$ and $d$ in 
Proposition~\ref{proposition:simplicity}
is not intended to be exhaustive;
it simply reflects the practical and theoretical limits
of the results used in the proof.
We would like to prove simplicity for at least all prime~$d$;
but
the Gaudry--G\"urel algorithm
requires $n$ and $d$ to be coprime,
so we cannot apply it
to cases such as $(d,n) = (11,11)$.
Further, the reduction of a superelliptic Jacobian can only be simple
if the residue field
contains a primitive $d^\mathrm{th}$ root of unity
(otherwise the superelliptic automorphism
does not commute with Frobenius,
so the endomorphism ring is noncommutative,
so the reduction is not simple).
This rules out many small primes of reduction,
rendering the computation much more expensive.
Computing Weil polynomials for 
$(d,n) = (7,15)$, $(5,21)$, $(3,31)$, and $(5,31)$
will therefore require
highly optimised implementations
and significant computing resources.


\end{document}